\newtheorem{theorem}{Theorem}[section]
\newtheorem{conj}[theorem]{Conjecture}
\newtheorem{lemma}[theorem]{Lemma}
\numberwithin{equation}{subsection}
\newtheorem{definition}[theorem]{Definition}
\title{
The rate of growth of moments of certain cotangent sums}
\author{Helmut Maier and Michael Th. Rassias}
\date{\today}
\address{Department of Mathematics, University of Ulm, Helmholtzstrasse 18, 8901 Ulm, Germany.}
\email{helmut.maier@uni-ulm.de}
\address{Department of Mathematics, ETH-Z\"{u}rich, R\"{a}mistrasse 101, 8092 Z\"{u}rich, Switzerland \& Department of Mathematics, Princeton University, Fine Hall, Washington Road, Princeton, NJ 08544-1000, USA}
\email{michail.rassias@math.ethz.ch, michailrassias@math.princeton.edu}
\thanks{}
\begin{document}

 \maketitle
 
\begin{abstract} We consider cotangent sums associated to the zeros of the Estermann zeta function considered by the authors 
in their previous paper \cite{mr}. We settle a question on the rate of growth of the moments of these cotangent sums left open in \cite{mr}, and
obtain a simpler proof of the equidistribution of these sums. \\ \\
\textbf{Key words:} Cotangent sums; equidistribution; Estermann zeta function; moments; continued fractions; measure.\\
\textbf{2000 Mathematics Subject Classification:}  11L03; 11M06.%
\newline

\end{abstract}

\section{Introduction}
The authors in joint work and the second author in his thesis, investigated the distribution of cotangent sums
$$c_0\left(\frac{r}{b}\right)=-\sum_{m=1}^{b-1}\frac{m}{b}\cot\left(\frac{\pi m r}{b} \right),$$
as $r$ ranges over the set
$$\{r\::\: (r,b)=1,A_0b\leq r\leq A_1b  \},$$
where $A_0,A_1$ are fixed with $1/2<A_0<A_1<1$ and $b$ tends to infinity.\\
Especially, they considered the moments
$$H_k=\lim_{b\rightarrow+\infty}\phi(b)^{-1}b^{-2k}(A_1-A_0)^{-1}\sum_{\substack{A_0b\leq r\leq A_1b \\ (r,b)=1}}c_0\left(\frac{r}{b} \right)^{2k},\  k\in\mathbb{N},$$
where $\phi(\cdot)$ denotes the Euler phi-function.\\
They could show that all the moments $H_k$ exist and that
$$\lim_{k\rightarrow+\infty}H_k^{1/k}=+\infty$$
Thus the series $\sum_{k\geq 0}H_kx^{2k}$converges only for $x=0$.\\
It was left open, whether the series
\[
\sum_{k\geq 0}\frac{H_k}{(2k)!}x^k\tag{*}
\]
converges for values of $x$ different from $0$. This fact would considerably simplify the proof for
the distribution of the cotangent sums $c_0(r/b)$ (uniqueness of measures determined by their moments, see \cite{billi}, Section 30, The Method of Moments, Theorem 30.1).\\
Crucial for the investigation was the result:
$$H_k=\int_0^1\left(\frac{g(x)}{2\pi} \right)^{2k}dx,$$
where
$$g(x)=\sum_{l\geq 1}\frac{1-2\{lx\}}{l}.$$
The function $g$ has been also investigated in the paper \cite{bre} of R. de la Bret\`eche and G. Tenenbaum. Their ideas will be crucial in our paper. We shall show the following theorems.
\begin{theorem}\label{x:maint}
There exists a constant $C_0>0$, such that
$$\int_0^1|g(x)|^Ldx\leq C_0^LL^L,$$
for all $L\in\mathbb{N}$.
\end{theorem}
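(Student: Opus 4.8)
The plan is to convert the stated moment bound into a statement about the measure of the level sets of $g$, and then to control those level sets.

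\emph{Reduction to an exponential tail bound.} Writing the $L$-th moment through its distribution function,
$$\int_0^1|g(x)|^L\,dx=L\int_0^\infty t^{L-1}\,\mathrm{meas}\{x\in[0,1]:|g(x)|>t\}\,dt,$$
I see that it suffices to produce constants $C,\kappa>0$ with
$$\mathrm{meas}\{x\in[0,1]:|g(x)|>t\}\le Ce^{-\kappa t}\qquad(t\ge0).$$
Indeed, inserting this and using $\int_0^\infty t^{L-1}e^{-\kappa t}\,dt=\kappa^{-L}\Gamma(L)$ together with $L!\le L^L$ gives $\int_0^1|g|^L\,dx\le C\kappa^{-L}L!\le C_0^LL^L$ for a suitable $C_0$. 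Thus the entire content of the theorem is the exponential integrability of $g$; this is also precisely the property needed to make the series $(*)$ converge in a neighbourhood of the origin.

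\emph{Local structure of $g$.} To locate the large values of $g$, I would expand each sawtooth into its Fourier series, obtaining the Davenport-type representation
$$g(x)=\frac{2}{\pi}\sum_{m\ge1}\frac{d(m)}{m}\sin(2\pi mx),$$
where $d$ denotes the divisor function. This shows that $g$ is smooth off the rationals, while a direct summation of the defining series gives $g(x)=\log(1/x)+O(1)$ as $x\to0^+$, so that $g$ has a logarithmic singularity at $0$. Exploiting the approximate self-similarity of the above expansion under $x\mapsto qx$—the terms with $q\mid m$ reproduce, up to the factor $d(q)/q$, a rescaled copy of $g$—I expect a local bound of the form
$$|g(x)|\le \frac{C\,d(q)}{q}\,\log\frac{1}{q\,|x-a/q|}+O(1)$$
on the Farey arc about each reduced fraction $a/q$. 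Establishing this uniformly is where the ideas of \cite{bre} enter and is the main obstacle: one must control both the error committed in replacing $g$ by its principal ($q\mid m$) part and the aggregate contribution of the terms with $q\nmid m$, as well as the behaviour of $g$ on the minor arcs lying away from the small-denominator singularities.

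\emph{Assembling the tail bound.} Granting the local estimate, the set where the spike at $a/q$ forces $|g(x)|>t$ is an interval of length $\ll q^{-1}\exp(-\kappa t\,q/d(q))$; since there are $\phi(q)<q$ reduced fractions with denominator $q$, the fractions of denominator $q$ contribute at most $\ll\exp(-\kappa t\,q/d(q))$ to the measure of $\{|g|>t\}$. Because $q/d(q)\to\infty$, the resulting series $\sum_{q\ge1}\exp(-\kappa t\,q/d(q))$ is dominated by its first few terms and is $\ll e^{-\kappa' t}$, which is exactly the tail bound required in the first step. The delicate part throughout will be the uniformity of the local estimate over all $a/q$ and a clean treatment of the transition between Farey arcs, ensuring that $g$ is not large except near the singularities already accounted for.
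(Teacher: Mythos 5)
Your opening reduction is exactly the paper's first move: everything rests on an exponential tail bound $\mathrm{meas}\{x:|g(x)|>t\}\ll e^{-\kappa t}$, and your layer-cake computation deriving $\int_0^1|g|^L\,dx\le C_0^LL^L$ from it is correct (the paper does the same thing with a dyadic decomposition of the level sets). The gap is in the second half. The local estimate you propose,
$$|g(x)|\le \frac{C\,d(q)}{q}\,\log\frac{1}{q\,|x-a/q|}+O(1)$$
on the Farey arc about $a/q$ with a uniform $O(1)$, is not only unproven (you flag it yourself as the main obstacle) --- it is false as stated. Take $a/q=1/q$ and a point $x$ in its arc with $q|x-1/q|\asymp q^{-1}$ and with tame subsequent partial quotients (e.g.\ $x=1/(q+\gamma)$, $\gamma$ the golden mean): your own asymptotic $g(x)=\log(1/x)+O(1)$ gives $g(x)=\log q+O(1)$, which is unbounded, while your proposed main term is $\frac{d(q)}{q}\log q=o(1)$. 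The point is that the large values of $g$ are not produced by proximity to a single rational; they accumulate across all scales of the continued fraction expansion, so any correct local bound must carry an additive term recording the contribution of all earlier convergents of $a/q$ (here, $\log q$ coming from $\log q_1/q_0$). Once that term is inserted, your assembling step no longer closes: the length of the interval on which the spike at $a/q$ forces $|g|>t$ is governed by $t$ minus that accumulated contribution, and summing over $a/q$ then requires a tail estimate for the distribution of $\sum_j\frac{\log q_{j+1}}{q_j}$ --- which is essentially the statement you are trying to prove.

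This is precisely where the paper's argument does its real work, and the two ingredients it uses are absent from your sketch. First, it proves the pointwise bound $|g(\alpha)|\le c_2\sum_{j\ge0}\frac{\log q_{j+1}(\alpha)}{q_j(\alpha)}+c_3$, summing over \emph{all} convergents of $\alpha$, by integrating the de la Bret\`eche--Tenenbaum evaluation of $Z_\tau(t,t;\alpha)$ arc by arc (their Lemma 11.2 and equation 11.5). Second, it establishes the exponential tail of $c(\alpha,+\infty)=\sum_j\frac{\log q_{j+1}}{q_j}$ by a metric continued-fraction argument: the event $\{c(\alpha,+\infty)\ge z\}$ is decomposed according to the first index $r$ at which the partial sums cross a geometrically spaced sequence of thresholds $w^{(r)}z$; this forces $a_{r+1}\ge\exp(\tfrac34c_0A_0^{r/2}z)$, and the measure of that event is bounded using the invariance of the Gauss measure under the shift $T$. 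That multi-scale decomposition is the missing idea; without it (or an equivalent device) the single-spike model cannot deliver the tail bound.
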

\begin{theorem}\label{x:122}
The series 
$$\sum_{k\geq 0}\frac{H_k}{(2k)!}x^k$$
diverges for $|x|>\pi^2$, where $x\in\mathbb{C}$.
\end{theorem}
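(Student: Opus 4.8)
The plan is to recast the power series as an exponential integral of $g$ and then read off divergence from a lower bound on how often $g$ is large. Since every coefficient $H_k/(2k)!$ is nonnegative, for real $r>0$ the series $\sum_{k\ge0}\frac{H_k}{(2k)!}r^k$ converges if $r<R$ and diverges if $r>R$, where $R$ is the radius of convergence; hence proving that it is $+\infty$ for \emph{every} real $r>\pi^2$ forces $R\le\pi^2$, and a power series diverges at every complex $x$ with $|x|>R$, in particular for $|x|>\pi^2$. So I fix $r>0$, set $\lambda=\sqrt r/(2\pi)$, insert $H_k=\int_0^1\big(g(x)/2\pi\big)^{2k}dx$, and interchange sum and integral (legitimate by Tonelli, all terms being nonnegative):
\[
\sum_{k\ge0}\frac{H_k}{(2k)!}\,r^{k}
=\int_0^1\sum_{k\ge0}\frac{\big(\lambda g(x)\big)^{2k}}{(2k)!}\,dx
=\int_0^1\cosh\big(\lambda g(x)\big)\,dx .
\]
Thus the series diverges exactly when this integral is infinite, and since $\cosh(\lambda g)\ge\tfrac12 e^{\lambda|g|}$ it will be enough to show $\int_0^1 e^{\lambda|g(x)|}\,dx=+\infty$ whenever $\lambda>\tfrac12$; note that $r>\pi^2$ is the same as $\lambda>\tfrac12$.

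Everything therefore reduces to a tail estimate: I want
\[
\mathrm{meas}\{x\in(0,1):|g(x)|>t\}\ \gg\ e^{-t/2}\qquad(t\to\infty),
\]
since this is precisely what makes $\int_0^1 e^{\lambda|g|}dx$ blow up once $\lambda>\tfrac12$. To produce such large values I would exploit the behaviour of $g$ near rationals. Writing $x=a/q+s$ with $(a,q)=1$ and splitting the defining sum according to whether $q\mid l$, one obtains the local expansion
\[
g\Big(\frac aq+s\Big)=\frac{\pi}{q}\,c_0\!\Big(\frac{\bar a}{q}\Big)+\frac1q\log\frac{1}{q|s|}+O(1),
\]
where $\bar a a\equiv1\pmod q$: the resonant indices $l\equiv0\pmod q$ produce the logarithm, while the remaining indices sum to the regularised value $\sum_{q\nmid l}\frac{1-2\{la/q\}}{l}=\frac{\pi}{q}c_0(\bar a/q)$, tying the size of $g$ directly to the cotangent sums of \cite{mr}. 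On the interval of length $\asymp q^{-2}$ attached to $a/q$ the logarithm is nonnegative, so $g(x)\gtrsim\frac{\pi}{q}c_0(\bar a/q)$ there; summing the lengths of these intervals over the fractions whose cotangent sum is large, and organising this sum through the continued fraction expansion of $x$ in the spirit of de la Bret\`eche and Tenenbaum \cite{bre}, is how I would assemble the measure of $\{|g|>t\}$.

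The hard part will be pinning the constant to exactly $\tfrac12$, i.e. locating the exact abscissa of convergence of $\lambda\mapsto\int_0^1 e^{\lambda g}dx$. Counting only the isolated logarithmic poles is misleading: they are dominated by the terms with $q=1$, where $g(x)\sim\log(1/x)$ as $x\to0$, and these alone give a tail of mere size $e^{-t}$, corresponding to the non-sharp radius $4\pi^2$. The additional mass that lowers the tail to $e^{-t/2}$ (and the radius to $\pi^2$) comes from the accumulation of the near-rational spikes over all denominators, whose total contribution is delicate: the naive sum over $a/q$ with weights $\asymp q^{-2}$ diverges and must be resummed correctly along the Stern--Brocot / continued fraction hierarchy, and it is exactly here that the techniques of \cite{bre} are indispensable. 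Granting the resulting bound $\mathrm{meas}\{|g|>t\}\gg e^{-t/2}$, the integral $\int_0^1 e^{\lambda|g|}dx$ diverges for every $\lambda>\tfrac12$, and Theorem \ref{x:122} then follows from the reduction of the first paragraph.
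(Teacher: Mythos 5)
Your opening reduction is correct and elegantly packaged: since $H_k\ge 0$ it suffices to treat real $r>\pi^2$, and Tonelli turns the series into $\int_0^1\cosh(\lambda g(x))\,dx$, so everything hinges on a lower bound for $\mathrm{meas}\{|g|>t\}$. But the bound you then need, $\mathrm{meas}\{|g|>t\}\gg e^{-t/2}$, is the entire substance of the theorem and you do not prove it --- you sketch a strategy and conclude by ``granting the resulting bound''. Worse, the bound appears to be false, and your own local expansion shows why: near $a/q$ you have $g(a/q+s)\approx\frac1q\log\frac1{q|s|}+O(1)$ (for typical $a$), so the set where $g>t$ near a fixed fraction $a/q$ has measure about $q^{-1}e^{-qt}$; summing over the $\varphi(q)$ numerators and then over $q\ge2$ gives $O(e^{-2t})$, exponentially smaller than the $q=1$ contribution $\asymp e^{-t}$ coming from $g(x)=\log(1/x)+O(1)$ near $0$. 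No resummation along the Stern--Brocot hierarchy recovers the missing mass, and a tail $\gg e^{-t/2}$ would in fact contradict Conjecture 1.3 of the paper (radius exactly $\pi^2$). The source of the stray factor of two is a normalization mismatch: the paper's proof of Theorem \ref{x:122} (and Theorem \ref{x:introprob}) uses $H_k=\int_0^1(g(x)/\pi)^{2k}dx$, not the $\int_0^1(g(x)/2\pi)^{2k}dx$ of the introduction, which you adopted. With the $\pi$ normalization your reduction gives $\sum_k H_kr^k/(2k)!=\int_0^1\cosh(\sqrt r\,g(x)/\pi)\,dx$, and divergence for $r>\pi^2$ needs only $\mathrm{meas}\{|g|>t\}\ge e^{-(1+o(1))t}$, i.e.\ just the $q=1$ spike.

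That last estimate is exactly what the paper proves, by a route much more elementary than the one you propose: on $I=[0,e^{-2k}]$ it splits $g=g_1+g_2+g_3$ according to whether $l\le e^{2k(1-2\delta)}$, $l\le e^{2k(1+2\delta)}$, or $l$ is larger; the head $g_1$ is $\ge(1-8\delta)2k$ pointwise on $I$ since each $B(l\alpha)$ is near $1$, the middle range is $O(\delta k)$ trivially, and the tail $g_3$ is $O(\delta k)$ off an exceptional set of measure $e^{-2k(1+\delta)}$ by Parseval (Lemmas \ref{x:6l2}--\ref{x:6l4}). Hence $|g|\ge(1-20\delta)2k$ on at least half of $I$, giving $H_k\ge\frac12e^{-2k}((1-20\delta)2k/\pi)^{2k}$, and Stirling finishes. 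Note that even the ``easy'' statement $g(x)\sim\log(1/x)$ near $0$ requires an argument of this kind to control the tail of the defining series uniformly on a set of nearly full measure in $I$; you cannot simply cite it. If you switch to the $\pi$ normalization and replace your unproved tail bound by the paper's Lemmas \ref{x:6l2}--\ref{x:6l4}, your $\cosh$ reformulation does yield the theorem, and is arguably a cleaner way to finish than the explicit Stirling computation.
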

From Theorem \ref{x:maint}, an \textit{affirmative answer} regarding the question of the positive radius of convergence of (*) follows. From Theorem \ref{x:122} it follows that the radius of convergence of the series (*) is finite.
\begin{conj}
The radius of convergence of the series (*) is $\pi^2$.
\end{conj}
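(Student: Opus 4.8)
The plan is to deduce the divergence from a lower bound on the moments $H_k$, exactly dual to the upper bound of Theorem \ref{x:maint}. Writing $R$ for the radius of convergence of (*), the Cauchy--Hadamard formula gives $1/R=\limsup_{k\to\infty}(H_k/(2k)!)^{1/k}$, so it suffices to prove $\limsup_{k\to\infty}(H_k/(2k)!)^{1/k}\ge\pi^{-2}$. Using the representation $H_k=(2\pi)^{-2k}\int_0^1 g(x)^{2k}\,dx$, this amounts to showing that $\int_0^1 g(x)^{2k}\,dx$ is at least of order $(2\pi)^{2k}\pi^{-2k}(2k)!=2^{2k}(2k)!$ up to factors subexponential in $k$; equivalently, after Stirling's formula $((2k)!)^{1/k}\sim 4k^2e^{-2}$, that $\big(\int_0^1 g^{2k}\big)^{1/k}$ exceeds $4\,((2k)!)^{1/k}$ in the limit.

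First I would localize the integral near the singularities of $g$. Since $g(1-x)=-g(x)$, the integrand $g^{2k}$ is symmetric about $x=1/2$, and the largest values of $|g|$ occur as $x\to0^+$ (and $x\to1^-$), where $g$ is unbounded. The key input, furnished by the local analysis of de la Bret\`eche and Tenenbaum, is a one-sided bound of the shape $g(x)\ge c\log(1/x)-C$ valid for all sufficiently small $x>0$, with an explicit constant $c>0$ governing the logarithmic blow-up (controlling the oscillating tail $\sum_{l>1/x}(1-2\{lx\})/l$ is what makes this delicate). Restricting the integral to $(0,\delta)$ and substituting $u=c\log(1/x)-C$ turns the bound into a Gamma integral,
\[
\int_0^1 g(x)^{2k}\,dx\ \ge\ \int_0^{\delta}\big(c\log(1/x)-C\big)^{2k}\,dx\ =\ (1+o(1))\,c^{2k}\,\Gamma(2k+1)\ =\ (1+o(1))\,c^{2k}(2k)!,
\]
because $\int_0^1(\log(1/x))^{2k}\,dx=\Gamma(2k+1)=(2k)!$ and the lower limit $u\asymp\log(1/\delta)$ is negligible against the mass of the Gamma integral, which concentrates at $u\asymp 2k$.

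Combining the two displays yields $H_k/(2k)!\ge(1+o(1))(c/2\pi)^{2k}$, hence $\limsup_k(H_k/(2k)!)^{1/k}\ge(c/2\pi)^2$ and therefore $R\le(2\pi/c)^2$. The whole theorem thus reduces to the value of the constant $c$: the threshold $|x|>\pi^2$ is exactly what one obtains from $c=2$, i.e.\ from the blow-up $g(x)\ge 2\log(1/x)-C$ near the origin. I expect the main obstacle to be precisely this quantitative point---establishing the lower bound on $g$ near $0$ with the sharp constant, uniformly down to the scale $x\asymp e^{-2k}$ that dominates the integral, which is where the control of the arithmetic tail (equivalently, of the distribution function $\mathrm{meas}\{x:\,|g(x)|>V\}$ as $V\to\infty$) enters. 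Everything else is soft: the symmetry-based localization near the singularity, the Gamma-integral substitution, and Stirling's formula.
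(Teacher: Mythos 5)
The statement you set out to prove is labelled a \emph{conjecture} in the paper, and the paper does not prove it; neither do you. ``Radius of convergence equal to $\pi^2$'' is a two-sided assertion. Your argument only addresses the lower bound for $\limsup_k (H_k/(2k)!)^{1/k}$, i.e.\ divergence for $|x|>\pi^2$ --- which is exactly Theorem \ref{x:122}. The other half, convergence for $|x|<\pi^2$, would require a matching sharp \emph{upper} bound of the form $\int_0^1|g|^{2k}\,dx\le (1+o(1))^{k}\,c^{2k}(2k)!$ with the correct constant $c$, whereas Theorem \ref{x:maint} only yields $C_0^{L}L^{L}$ with an unspecified $C_0$ (hence only a positive, not optimal, radius). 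That sharp upper bound is precisely what remains open and is why the statement is a conjecture. So at best your proposal is an outline of Theorem \ref{x:122}, not of the statement.

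Even as a proof of the divergence half, your key input is both unproved and incorrect as stated. You posit a pointwise bound $g(x)\ge c\log(1/x)-C$ for all small $x$, with $c=2$. On the constant: the head of the series gives $\sum_{l\le 1/x}(1-2\{lx\})/l=\sum_{l\le 1/x}(1-2lx)/l=\log(1/x)+O(1)$, and the tail $l>1/x$ does not generically contribute another $\log(1/x)$, so the blow-up constant is $1$, not $2$; this is exactly what Lemma \ref{x:6l2} encodes ($g_1(\alpha)\ge(1-8\delta)2k$ on $I=[0,e^{-2k}]$, i.e.\ $g\gtrsim\log(1/\alpha)$). Your $c=2$ appears to be reverse-engineered from the introduction's normalization $H_k=\int_0^1(g/2\pi)^{2k}$, but the paper's proof of Theorem \ref{x:122} works with $H_k=\int_0^1(g/\pi)^{2k}$ and $c=1$; with $c=1$ and the $2\pi$ normalization your method only gives divergence for $|x|>4\pi^2$. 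More seriously, no pointwise lower bound of that shape can hold for \emph{all} small $x$: by Th\'eor\`eme 4.4 of \cite{bre}, $g(\alpha)$ is governed by the alternating sum $\sum_r(-1)^r\log q_{r+1}/q_r$, and taking $\alpha=[0;a_1,a_2,\ldots]$ with $a_1=N$ and $a_2$ astronomically large makes the negative term $\log q_2/q_1$ swamp $\log(1/\alpha)\asymp\log N$. The bound can only hold off an exceptional set, and showing that this set occupies at most half of $I$ at the dominant scale $x\asymp e^{-2k}$ is the actual content of the paper's argument (the decomposition of Definition \ref{x:377} plus the Parseval--Chebyshev estimate of Lemma \ref{x:6l4}). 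You flag this as the ``main obstacle,'' but it is the entire proof, and it is absent from your proposal.
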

\vspace{5mm}
\section{Continued fractions}
\begin{definition}
Let $\alpha\in[0,1]\setminus \mathbb{Q}$. Assume that
$$\alpha=[0;a_1,a_2,\ldots]=\frac{1}{a_1+\frac{1}{a_2+\frac{1}{a_3+\cdots}}} $$
is its continued fraction expansion with integers $a_i\geq 1$ for $i=1,2,\ldots$\\
We denote the partial quotients by $p_r/q_r$, i.e. 
$$[0;a_1,a_2,\ldots, a_r]=\frac{p_r}{q_r},\ \text{with}\ (p_r,q_r)=1.$$
We set $p_{-1}=1$, $q_{-1}=0$, $p_0=0$, $q_0=1$.
\end{definition}
\begin{definition}
The map $T\::\:(0,1)\rightarrow(0,1),$ $\alpha\mapsto\frac{1}{\alpha}-\left\lfloor\frac{1}{\alpha}\right\rfloor$ is called the continued fraction map
(or Gauss map).
\end{definition}
\begin{lemma}\label{x:quo}
The partial quotients $p_r$, $q_r$ satisfy the recursion:
\[
p_{r+1}=a_{r+1}p_r+p_{r-1}\ \ \text{and}\ \ q_{r+1}=a_{r+1}q_r+q_{r-1}.\tag{1}
\]
\end{lemma}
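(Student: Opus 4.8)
The plan is to prove the recursion by induction on $r$, with the crucial tool being a functional version of the convergent formula in which the last partial quotient is allowed to be an arbitrary positive real. First I would introduce auxiliary sequences $P_r,Q_r$ defined \emph{by} the recursion $(1)$ together with the given initial data $P_{-1}=1$, $Q_{-1}=0$, $P_0=0$, $Q_0=1$, and then show that the reduced fractions $p_r/q_r$ of the convergents coincide with $P_r/Q_r$. Since the lemma asserts a property of the already-defined convergents $p_r/q_r$, the real work lies in matching the recursively-generated sequence to them.

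The heart of the argument is the auxiliary identity: for every real $t>0$ and every $r\geq 0$,
$$[0;a_1,\dots,a_r,t]=\frac{tP_r+P_{r-1}}{tQ_r+Q_{r-1}}.$$
I would establish this by induction on $r$. The base case $r=0$ reads $[0;t]=1/t=(t\cdot 0+1)/(t\cdot 1+0)$, which holds by the initial data. For the inductive step I would use the elementary collapsing identity $[0;a_1,\dots,a_{r+1},t]=[0;a_1,\dots,a_r,a_{r+1}+1/t]$, apply the inductive hypothesis with $a_{r+1}+1/t$ in place of $t$, clear the factor $t$ from numerator and denominator, and regroup; the coefficients that appear are exactly $a_{r+1}P_r+P_{r-1}$ and $a_{r+1}Q_r+Q_{r-1}$, which by $(1)$ equal $P_{r+1}$ and $Q_{r+1}$. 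Specializing $t=a_{r+1}$ then yields $[0;a_1,\dots,a_{r+1}]=P_{r+1}/Q_{r+1}$, so the recursively-defined sequence produces the convergents.

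To conclude that $P_r=p_r$ and $Q_r=q_r$ I would verify coprimality via the determinant $D_r:=P_rQ_{r-1}-P_{r-1}Q_r$. A one-line computation from $(1)$ gives $D_{r+1}=-D_r$, and $D_0=-1$, so $D_r=(-1)^{r+1}$; in particular $\gcd(P_r,Q_r)=1$, since any common divisor of $P_r$ and $Q_r$ divides $D_r=\pm 1$. As $Q_r>0$ for all $r\geq 0$, the uniqueness of the reduced representation of $P_r/Q_r=[0;a_1,\dots,a_r]$ forces $P_r=p_r$ and $Q_r=q_r$, and hence $p_r,q_r$ satisfy $(1)$.

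The only genuine subtlety—and the step I expect to require the most care—is the collapsing identity $[0;a_1,\dots,a_{r+1},t]=[0;a_1,\dots,a_r,a_{r+1}+1/t]$ together with the bookkeeping of indices in the inductive step; everything else is routine algebra. One should also confirm that the inductive hypothesis may legitimately be applied with the non-integer argument $a_{r+1}+1/t$, which is precisely why stating the auxiliary identity for all real $t>0$, rather than only for integer partial quotients, is essential.
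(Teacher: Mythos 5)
Your proof is correct. The paper itself gives no argument for this lemma---it simply cites Hensley's \emph{Continued Fractions}, p.~7---and what you have written is precisely the classical proof found in such references: induction via the identity $[0;a_1,\dots,a_r,t]=(tP_r+P_{r-1})/(tQ_r+Q_{r-1})$ for real $t>0$, followed by the determinant identity $P_rQ_{r-1}-P_{r-1}Q_r=(-1)^{r+1}$ to secure coprimality and hence identification with the reduced convergents $p_r/q_r$. All steps (the collapsing identity, the legitimacy of applying the inductive hypothesis to the non-integer argument $a_{r+1}+1/t$, the positivity of $Q_r$, and the uniqueness of the reduced representation) are handled correctly.
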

\noindent\textit{Proof.} (cf. \cite{hens}, p. 7).
\begin{lemma}\label{x:22}
For $\alpha=[0;a_1,a_2,\ldots,a_r,a_{r+1},\ldots]$, we have
\[
T^r\alpha=[0;a_{r+1},a_{r+2},\ldots ] \tag{2}
\]
The map $T$ preserves the measure
\[
\omega(\mathcal{E})=\frac{1}{\log 2}\int_{\mathcal{E}}\frac{dx}{1+x},\tag{3}
\]
i.e. $\omega(T(\mathcal{E}))=\omega(\mathcal{E})$, for all measurable sets $\mathcal{E}\subset (0,1)$.
\end{lemma}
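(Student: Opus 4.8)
The plan is to treat the two assertions separately, both being classical properties of the Gauss map. For the shift formula (2), I would first establish the base case $r=1$ directly from the definition of $T$. Writing $\alpha=[0;a_1,a_2,\ldots]$, one has $1/\alpha=a_1+[0;a_2,a_3,\ldots]$, and since the tail $[0;a_2,a_3,\ldots]$ lies in $(0,1)$, it follows that $\lfloor 1/\alpha\rfloor=a_1$ and hence $T\alpha=\{1/\alpha\}=[0;a_2,a_3,\ldots]$. Thus $T$ acts as the shift on the continued fraction digits $a_i$. The general formula $T^r\alpha=[0;a_{r+1},a_{r+2},\ldots]$ then follows by an immediate induction on $r$, applying the base case to $T^{r-1}\alpha=[0;a_r,a_{r+1},\ldots]$.

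For the invariance of $\omega$ under $T$, I would work with preimages, i.e. verify $\omega(T^{-1}\mathcal{E})=\omega(\mathcal{E})$, which is the appropriate formulation of invariance for the noninjective map $T$. Since the intervals $[0,t)$ with $0<t<1$ generate the Borel $\sigma$-algebra on $(0,1)$, it suffices to check the identity for $\mathcal{E}=[0,t)$. The key observation is the branch decomposition of $T$: on each interval $(1/(n+1),1/n)$ the map acts by $x\mapsto 1/x-n$ and is a decreasing bijection onto $(0,1)$. Consequently $T^{-1}[0,t)$ is, up to a null set, the disjoint union over $n\geq 1$ of the intervals $(1/(n+t),1/n)$.

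It then remains to evaluate
\[
\omega\bigl(T^{-1}[0,t)\bigr)=\frac{1}{\log 2}\sum_{n\geq 1}\int_{1/(n+t)}^{1/n}\frac{dx}{1+x}=\frac{1}{\log 2}\sum_{n\geq 1}\log\frac{(n+1)(n+t)}{n(n+t+1)}.
\]
The summand splits as $[\log(n+1)-\log n]+[\log(n+t)-\log(n+t+1)]$, so the partial sums telescope: the first bracket contributes $\log(N+1)$ and the second $\log(1+t)-\log(N+t+1)$, whose total tends to $\log(1+t)$ as $N\to\infty$. This yields $\omega(T^{-1}[0,t))=\frac{1}{\log 2}\log(1+t)=\omega([0,t))$, completing the proof.

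The main point to get right is the branch decomposition together with the bookkeeping of the telescoping series. A conceptually cleaner but equivalent route is to verify that the density $1/((1+x)\log 2)$ is a fixed point of the transfer operator $(\mathcal{L}h)(x)=\sum_{n\geq1}(n+x)^{-2}h(1/(n+x))$, which reduces to the same telescoping identity $\sum_{n\geq1}\frac{1}{(n+x)(n+x+1)}=\frac{1}{1+x}$.
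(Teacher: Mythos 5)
Your proof is correct, but it is worth noting that the paper does not actually prove this lemma at all: for (2) it says the result "is well known and can be easily confirmed by direct computation," and for (3) it simply cites Harman's \emph{Metric Number Theory}, p.~119. You supply the missing content. Your argument for (2) — the base case $\lfloor 1/\alpha\rfloor=a_1$ plus induction — is exactly the "direct computation" the paper waves at. For (3), your proof via the branch decomposition $T^{-1}[0,t)=\bigcup_{n\geq 1}\bigl(1/(n+t),\,1/n\bigr)$ (up to a null set) and the telescoping evaluation of $\sum_{n\geq 1}\log\frac{(n+1)(n+t)}{n(n+t+1)}=\log(1+t)$ is the standard textbook proof of Gauss-measure invariance, and your numbers check out. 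You also correctly repair a genuine imprecision in the statement itself: as written, $\omega(T(\mathcal{E}))=\omega(\mathcal{E})$ is false for forward images (e.g.\ $\mathcal{E}=(1/2,1)$ has $T(\mathcal{E})=(0,1)$, yet $\omega(\mathcal{E})=\log(4/3)/\log 2<1$), and invariance must be read as $\omega(T^{-1}\mathcal{E})=\omega(\mathcal{E})$, which is what the paper implicitly uses via Harman. Two small points: your reduction to sets $[0,t)$ deserves one more sentence (the intervals form a $\pi$-system generating the Borel $\sigma$-algebra, so two finite measures agreeing there agree everywhere), and the closing transfer-operator remark, while correct, is redundant with the main computation. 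What your route buys is a self-contained lemma; what the paper's route buys is brevity, at the cost of leaving the forward-image slip uncorrected.
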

\begin{proof}
The result (2) is well known and can be easily confirmed by direct computation. For (3) cf. \cite{harman}, p. 119.
\end{proof}
\begin{lemma}
There is a constant $A_0>1$, such that
$$q_r\geq A_0^r\:,$$
for all $r\in\mathbb{N}.$
\end{lemma}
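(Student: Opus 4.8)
The plan is to reduce the statement to the fact that the sequence $(q_r)$ grows at least as fast as the Fibonacci numbers, which is forced by the recursion (1) of Lemma \ref{x:quo} together with the constraint $a_i\geq 1$ built into the continued fraction expansion. Since $a_{r+1}\geq 1$ and $q_{r-1}\geq 0$, the recursion gives at once
$$q_{r+1}=a_{r+1}q_r+q_{r-1}\geq q_r+q_{r-1}$$
for every $r\geq 1$, and in particular $q_{r+1}\geq q_r$.

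From this I would extract the exponential bound by a two-step induction. Writing $\varphi=(1+\sqrt{5})/2$ for the golden ratio, so that $\varphi^2=\varphi+1$, I claim $q_r\geq\varphi^{r-1}$ for all $r\geq 1$. The cases $r=1$ and $r=2$ follow from $q_1=a_1\geq 1=\varphi^0$ and $q_2\geq q_1+q_0\geq 2>\varphi$, and the inductive step is
$$q_{r+1}\geq q_r+q_{r-1}\geq\varphi^{r-1}+\varphi^{r-2}=\varphi^{r-2}(\varphi+1)=\varphi^r.$$
A cruder but more transparent alternative is to iterate $q_{r+2}\geq q_{r+1}+q_r\geq 2q_r$, which yields $q_r\geq 2^{\lfloor r/2\rfloor}$ directly.

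To obtain the stated form $q_r\geq A_0^r$, I would take $A_0=\sqrt{\varphi}$; then the inequality $A_0^r\leq\varphi^{r-1}$ is equivalent to $r\geq 2$, so $q_r\geq A_0^r$ follows for all $r\geq 2$ from $q_r\geq\varphi^{r-1}$, while the single index $r=1$ (where $q_1=a_1\geq 1$) is trivial. There is no genuine obstacle in this argument: its entire content is the passage from $a_i\geq 1$ to a two-term linear recurrence with nonnegative coefficients, after which the golden-ratio growth of the Fibonacci sequence forces the exponential lower bound. The one point deserving attention is the base of the induction, and in particular that $q_1$ may equal $1$ when $a_1=1$; this is exactly why the sharp clean inequality is $q_r\geq\varphi^{r-1}$ and why $A_0$ must be chosen strictly below $\varphi$.
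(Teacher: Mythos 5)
Your argument is in substance the one the paper intends: the paper's entire proof is the remark that the bound ``easily follows from (1)'', and your Fibonacci-type induction is the right way to flesh that out. The core steps are correct: $a_{r+1}\geq 1$ and $q_{r-1}\geq 0$ give $q_{r+1}\geq q_r+q_{r-1}$, and the two-step induction yielding $q_r\geq\varphi^{r-1}$ for all $r\geq 1$ is sound.

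There is, however, a genuine flaw in your last step, and it is one you half-notice and then talk yourself out of. With $A_0=\sqrt{\varphi}\approx 1.27$, the case $r=1$ is \emph{not} trivial: whenever $a_1=1$ (e.g.\ for $\alpha=[0;1,1,1,\ldots]$) one has $q_1=1<\sqrt{\varphi}$, so $q_1\geq A_0^1$ fails. Choosing $A_0$ ``strictly below $\varphi$'' does not help; at $r=1$ you would need $A_0\leq q_1=1$, incompatible with $A_0>1$. So what your induction actually proves is $q_r\geq A_0^r$ for all $r\geq 2$ (equivalently, $q_r\geq \varphi^{r-1}$ for all $r\geq 1$), and no argument can do better, because the lemma as literally stated is false at $r=1$ for every $A_0>1$ when $a_1=1$. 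The statement should be read either as restricted to $r\geq 2$, or with an extra multiplicative constant ($q_r\geq cA_0^r$), or with exponent $r-1$; any of these captures the exponential growth rate, which is all that is used later (in Lemma \ref{x:28}, where only the behavior for large $r$ and large $z$ matters, up to adjusting absolute constants). In a careful write-up you should state this restriction explicitly rather than declare the index $r=1$ trivial; as it stands, that sentence asserts something false.
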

\begin{proof}
This is well known and easily follows from (1) of Lemma \ref{x:quo}.
\end{proof}
\begin{definition}
Let $\alpha\in (0,1)\setminus \mathbb{Q}$, $r\in\mathbb{N}$. Then, we set
$$c(\alpha,r)=\sum_{j=0}^r\frac{\log q_{j+1}}{q_j},$$
\[
c(\alpha,+\infty)=\sum_{j=0}^{+\infty}\frac{\log q_{j+1}}{q_j}\in\mathbb{R}\cup\{+\infty\}
\]
We define the constant $c_0>0$, by
$$c_0\sum_{r\geq 0}A_0^{-r/2}=\frac{1}{4}$$
and define the sequence $(w^{(r)})$ by
$$w^{(r)}=\frac{1}{2}+c_0\sum_{j=0}^rA_0^{-j/2}.$$
For $z\in(0,+\infty)$, we define 
\begin{align*}
\mathcal{E}(z,0)&:=\{\alpha\in(0,1)\setminus \mathbb{Q}\::\:c(\alpha,1)\geq w^{(0)}z\},\ (w^{(0)}=1/2)\\
\mathcal{E}(z,r)&:=\{\alpha\in(0,1)\setminus \mathbb{Q}\::\:c(\alpha,r-1)< w^{(r-1)}z,\: c(\alpha,r)\geq w^{(r)}z\}\\
\mathcal{E}(z,+\infty)&:=\{\alpha\in(0,+\infty)\setminus \mathbb{Q}\::\:c(\alpha,+\infty)\geq z\}.
\end{align*}
\end{definition}
\begin{lemma}\label{x:44}
For $z\in(0,+\infty)$, it holds 
$$meas(\mathcal{E}(z,+\infty))\leq \sum_{r\geq 0}meas (\mathcal{E}(z,r)),$$
where $meas$ stands for the Lebesgue measure.
\end{lemma}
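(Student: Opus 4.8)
The plan is to reduce the measure inequality to the set-theoretic containment
$$\mathcal{E}(z,+\infty)\subseteq\bigcup_{r\geq 0}\mathcal{E}(z,r),$$
after which the lemma is immediate from the countable subadditivity of Lebesgue measure: indeed
$$meas(\mathcal{E}(z,+\infty))\leq meas\Big(\bigcup_{r\geq 0}\mathcal{E}(z,r)\Big)\leq\sum_{r\geq 0}meas(\mathcal{E}(z,r)).$$
So the entire content of the lemma lies in proving the inclusion.

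Before doing so I would record two monotonicity facts. First, since $q_0=1$ and $q_{j+1}\geq q_j\geq 1$ by the recursion of Lemma \ref{x:quo}, each summand $\log q_{j+1}/q_j$ is nonnegative; hence $r\mapsto c(\alpha,r)$ is non-decreasing and $c(\alpha,r)\to c(\alpha,+\infty)$ as $r\to+\infty$. Second, the numbers $w^{(r)}$ are non-decreasing in $r$ and, by the defining relation $c_0\sum_{r\geq 0}A_0^{-r/2}=\tfrac14$, are bounded above by $\tfrac12+c_0\sum_{j\geq 0}A_0^{-j/2}=\tfrac34$. In particular $w^{(r)}z\leq\tfrac34 z<z$ for every $r$.

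Now I would fix $\alpha\in\mathcal{E}(z,+\infty)$, so that $c(\alpha,+\infty)\geq z$. If $\alpha\in\mathcal{E}(z,0)$ there is nothing to prove, so assume instead $c(\alpha,1)<w^{(0)}z=\tfrac12 z$. Since $w^{(r)}z\leq\tfrac34 z<z\leq c(\alpha,+\infty)=\lim_{r\to+\infty}c(\alpha,r)$, there is an $R\geq 1$ with $c(\alpha,R)>\tfrac34 z\geq w^{(R)}z$; thus the set $\{r\geq 1:\ c(\alpha,r)\geq w^{(r)}z\}$ is nonempty, and I let $r_0$ be its least element. Because $c(\alpha,1)<\tfrac12 z=w^{(0)}z\leq w^{(1)}z$, the value $r=1$ is excluded, so $r_0\geq 2$. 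Minimality of $r_0$ then gives $c(\alpha,r_0-1)<w^{(r_0-1)}z$, while $c(\alpha,r_0)\geq w^{(r_0)}z$ holds by the choice of $r_0$; these are precisely the two conditions defining $\mathcal{E}(z,r_0)$, whence $\alpha\in\mathcal{E}(z,r_0)$ and the containment is established.

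The delicate point, and the only place where care is genuinely needed, is the bookkeeping around the first index at which the increasing sequence $c(\alpha,r)$ overtakes the moving threshold $w^{(r)}z$: one must check that this first-passage index $r_0$ satisfies \emph{both} strict and non-strict inequalities in the definition of $\mathcal{E}(z,r_0)$, and in particular that the special normalization $w^{(0)}=\tfrac12$ used in $\mathcal{E}(z,0)$, combined with the monotonicity of $w^{(r)}$, rules out the boundary case $r_0=1$. The structural fact that makes the whole argument go through is that the thresholds stay uniformly bounded below $z$ (via $\sup_r w^{(r)}=\tfrac34<1$), which forces the non-decreasing sequence $c(\alpha,r)$ to cross them whenever $c(\alpha,+\infty)\geq z$.
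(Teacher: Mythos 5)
Your proof is correct and follows essentially the same route as the paper: both reduce the lemma to the inclusion $\mathcal{E}(z,+\infty)\subseteq\bigcup_{r\geq 0}\mathcal{E}(z,r)$ plus countable subadditivity, and both hinge on the monotonicity of $c(\alpha,\cdot)$ and the bound $\sup_r w^{(r)}=\tfrac{3}{4}<1$. The only cosmetic difference is that you exhibit the first index $r_0$ at which $c(\alpha,r)$ crosses $w^{(r)}z$, whereas the paper argues by contraposition, showing by induction that $\alpha\notin\mathcal{E}(z,r)$ for all $r$ forces $c(\alpha,+\infty)\leq\tfrac{3}{4}z$.
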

\begin{proof}
Assume that $\alpha\not\in \mathcal{E}(z,r)$, for every $r\in\mathbb{N}\cup\{0\}$. Then it follows by induction on $r$, that
$$c(\alpha,r)\leq w^{(r)}z$$
and thus
$$c(\alpha,+\infty)=\lim_{r\rightarrow+\infty}c(\alpha,r)\leq \frac{3}{4}z.$$
Therefore, if $\alpha\in \mathcal{E}(z,+\infty)$ we have $\alpha\in \mathcal{E}(z,r)$ for at least one value of $r\in\mathbb{N}\cup\{0\}$. Thus
$$\mathcal{E}(z,+\infty)\subset \bigcup_{r=0}^{+\infty}\mathcal{E}(z,r),$$
which proves Lemma \ref{x:44}
\end{proof}
\begin{lemma}\label{x:28}
There are absolute constants $z_0>0$ and $c_0>0$, such that
$$meas(\mathcal{E}(z,r))\leq \exp\left(-\frac{1}{2}c_0A_0^{r/2}z\right),$$
for all $z\geq z_0.$
\end{lemma}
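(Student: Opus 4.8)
The plan is to translate membership in $\mathcal{E}(z,r)$ into a lower bound on a single partial quotient and then estimate the resulting probability cylinder by cylinder. For $r\geq 1$ and $\alpha\in\mathcal{E}(z,r)$, subtracting the two defining inequalities $c(\alpha,r-1)<w^{(r-1)}z$ and $c(\alpha,r)\geq w^{(r)}z$, and using both $c(\alpha,r)-c(\alpha,r-1)=\log q_{r+1}/q_r$ and $w^{(r)}-w^{(r-1)}=c_0A_0^{-r/2}$, yields
\[
\log q_{r+1}>c_0A_0^{-r/2}z\,q_r=:\beta q_r .
\]
Thus $\mathcal{E}(z,r)\subseteq\{\alpha:\log q_{r+1}>\beta q_r\}$, and it suffices to bound the measure of the latter set.

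Next I would decompose $(0,1)\setminus\mathbb{Q}$ into the cylinders $\Delta(a_1,\dots,a_r)$ on which the first $r$ partial quotients are prescribed; there $q_r$ and $q_{r-1}$ are constant and, by Lemma \ref{x:quo}, $\log q_{r+1}>\beta q_r$ is equivalent to $a_{r+1}q_r+q_{r-1}>e^{\beta q_r}$, i.e. to $a_{r+1}\geq N$ with $N\geq \tfrac12 e^{\beta q_r}/q_r$ provided $z\geq z_0$ is large enough that $e^{\beta q_r}\geq 2q_{r-1}$. Parametrizing the cylinder by $t=T^r\alpha\in(0,1)$ through the M\"obius branch $\alpha=(p_r+p_{r-1}t)/(q_r+q_{r-1}t)$, whose Jacobian is $(q_r+q_{r-1}t)^{-2}\leq q_r^{-2}$, and noting that $a_{r+1}\geq N$ corresponds to $t\in(0,1/N]$, I get
\[
meas\big(\Delta(a_1,\dots,a_r)\cap\{a_{r+1}\geq N\}\big)\leq \frac{1}{N\,q_r^2}\leq \frac{2}{q_r}\,e^{-\beta q_r}.
\]

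Summing over all cylinders gives $meas(\mathcal{E}(z,r))\leq 2\sum_{(a_1,\dots,a_r)}q_r^{-1}e^{-\beta q_r}$. To extract the stated decay I would write $e^{-\beta q_r}=e^{-\beta q_r/2}\,e^{-\beta q_r/2}$ and, in the first factor, invoke the lower bound $q_r\geq A_0^r$ so that $\beta q_r\geq c_0A_0^{r/2}z$ and hence $e^{-\beta q_r/2}\leq e^{-\frac12 c_0A_0^{r/2}z}$. For the leftover sum I would write $q_r^{-1}e^{-\beta q_r/2}=q_r^{-2}\cdot q_re^{-\beta q_r/2}$, bound $q_re^{-\beta q_r/2}\leq \sup_{x>0}xe^{-\beta x/2}=2/(e\beta)$, and use the elementary identity $\sum_{(a_1,\dots,a_r)}q_r^{-2}\leq 2$ (each cylinder has length $(q_r(q_r+q_{r-1}))^{-1}\geq \tfrac12 q_r^{-2}$ and these lengths sum to $1$). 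This produces
\[
meas(\mathcal{E}(z,r))\leq \frac{8}{e\,c_0}\,\frac{A_0^{r/2}}{z}\,e^{-\frac12 c_0A_0^{r/2}z}.
\]

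Finally I would absorb the polynomial prefactor: with $v:=A_0^{r/2}z$ the prefactor equals $\tfrac{8}{ec_0}v/z^2\leq \tfrac{8}{ec_0z_0^2}v$ for $z\geq z_0$, and since $v\,e^{-\frac14 c_0 v}$ is bounded on $(0,\infty)$ one obtains $meas(\mathcal{E}(z,r))\leq C z_0^{-2}e^{-\frac14 c_0 A_0^{r/2}z}$; taking $z_0$ large enough that $Cz_0^{-2}\leq 1$ gives the assertion with the absolute constant $c_0/2$ in place of $c_0$. The boundary case $r=0$, where the condition is $c(\alpha,1)\geq \tfrac12 z$, is treated the same way after observing that it forces one of $q_1$, $q_2$ to be exponentially large in $z$. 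I expect the main obstacle to be the bookkeeping of the summation step: the factor $e^{-\frac12 c_0A_0^{r/2}z}$ must be pulled out cleanly while the residual sum over cylinders is shown to contribute only the harmless polynomial factor in $A_0^{r/2}z$, and it is exactly here that both inputs $q_r\geq A_0^r$ and $\sum_{(a_1,\dots,a_r)}q_r^{-2}=O(1)$ are indispensable.
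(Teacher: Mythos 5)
Your argument is correct, and after the common first step (converting membership in $\mathcal{E}(z,r)$ into the lower bound $\log q_{r+1}>c_0A_0^{-r/2}z\,q_r$ and hence into a lower bound on $a_{r+1}$) it diverges from the paper's. The paper estimates $\operatorname{meas}\{\alpha: a_{r+1}\geq w\}$ for the \emph{uniform} threshold $w=\exp\bigl(\tfrac34 c_0A_0^{r/2}z\bigr)$ by invoking Lemma \ref{x:22}: since $T$ preserves the Gauss measure $\omega$ and $\{a_{r+1}\geq w\}=T^{-r}\{a_1\geq w\}$, the set has $\omega$-measure at most that of $(0,w^{-1})$, giving $2w^{-1}$ in one line. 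You instead work cylinder by cylinder with the M\"obius parametrization, keep the cylinder-dependent threshold $a_{r+1}\gg e^{\beta q_r}/q_r$, and close the sum with the elementary facts $q_r\geq A_0^r$ and $\sum_{(a_1,\dots,a_r)}q_r^{-2}\leq 2$. What the paper's route buys is brevity and a clean constant (it lands exactly on $\exp(-\tfrac12 c_0A_0^{r/2}z)$); what yours buys is self-containedness -- no appeal to the invariance of the Gauss measure -- at the cost of an extra polynomial prefactor that must be absorbed, leaving you with exponent $\tfrac14 c_0$ rather than $\tfrac12 c_0$. That loss is harmless: Lemma \ref{x:66} only needs geometric decay in $A_0^{r/2}z$, so the downstream argument is unaffected (though note the paper's $c_0$ is the specific constant fixed in the definition of $w^{(r)}$, so strictly you are proving the lemma with a smaller absolute constant in the exponent). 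You also treat the case $r=0$ explicitly, which the paper's indexing glosses over; your observation that $c(\alpha,1)\geq\tfrac12 z$ forces one of $q_1,q_2$ to be exponentially large is the right fix.
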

\begin{proof}
Assume that $\alpha\in\mathcal{E}(z,r)$. We have
$$c(\alpha,r)=c(\alpha,r-1)+\frac{\log q_{r+1}}{q_r}.$$
The inequalities
$$c(\alpha,r-1)<w^{(r-1)}z\ \ \text{and}\ \ c(\alpha,r)\geq w^{(r)}z,$$ 
imply that
\[
\frac{\log q_{r+1}}{q_r}\geq \left(w^{(r)}-w^{(r-1)}  \right)z=c_0A_0^{-r/2}z\tag{4}
\]
and
\[
q_{r+1}\geq \exp\left(c_0A_0^{-r/2}q_rz \right)\geq \exp\left(c_0q_r^{1/2}z \right).\tag{5}
\]
From
$$q_{r+1}=a_{r+1}q_r+q_{r-1}\leq (a_{r+1}+1)q_r$$
we obtain
\begin{align*}
a_{r+1}&\geq q_{r+1}q_r^{-1}-1\geq \exp\left(c_0q_r^{1/2}z \right)q_r^{-1}-1\tag{6}\\
&\geq \exp\left(\frac{3}{4}c_0q_r^{1/2}z \right)\geq \exp\left(\frac{3}{4}c_0A_0^{r/2}z  \right), 
\end{align*}
if $z_0$ is sufficiently large.\\
We have for all $w>0$:
$$T^r\{\alpha=[0;a_1,\ldots,a_{r+1},\ldots], a_{r+1}\geq w\}=\{\alpha=[0;a_{r+1},\ldots],a_{r+1}\geq w\},$$
by Lemma \ref{x:22}.\\
Since $T$ preserves the measure $\omega$, we have:
\begin{align*}
\omega\{\alpha=[0;a_1,\ldots,a_{r+1},\ldots], a_{r+1}\geq w\}&=\omega\{\alpha=[0;a_{r+1},\ldots],a_{r+1}\geq w\}.
\end{align*}
Therefore
$$[0;a_{r+1},\ldots]\leq w^{-1}$$
and thus
\begin{align*}
\omega\{\alpha&=[0;a_1,\ldots,a_{r+1},\ldots ], a_{r+1}\geq w\}\tag{7} \\
&\leq \frac{1}{\log 2}\int_{0}^{w^{-1}}\frac{dx}{1+x}\leq 2w^{-1}.
\end{align*}
Applying (6) and (7) we obtain
\[
meas\left(\mathcal{E}(z,r)\right)\leq 2w^{-1}.\tag{8}
\] 
We set in (8):
$$w=\exp\left(\frac{3}{4}c_0A_0^{r/2}z\right).$$
Then
$$meas( \mathcal{E}(z,r))\leq \exp\left(-\frac{1}{2}c_0A_0^{r/2}z \right)  .$$
\end{proof}
\begin{lemma}\label{x:66}
There is a constant $c_1>0$, such that
$$meas(\mathcal{E}(z,+\infty))\leq \exp(-c_1z),\ \text{if}\ z\geq z_0.$$
\end{lemma}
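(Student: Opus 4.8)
The plan is to combine the two preceding lemmas and then estimate a rapidly convergent series. By Lemma \ref{x:44} we have $meas(\mathcal{E}(z,+\infty))\leq \sum_{r\geq 0}meas(\mathcal{E}(z,r))$, and by Lemma \ref{x:28}, for $z\geq z_0$ each summand is bounded by $\exp(-\tfrac{1}{2}c_0 A_0^{r/2}z)$. Hence it suffices to show that
$$\sum_{r\geq 0}\exp\left(-\frac{1}{2}c_0 A_0^{r/2}z\right)\leq \exp(-c_1 z)$$
for a suitable $c_1>0$ and all $z\geq z_0$, after possibly enlarging $z_0$.

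The key observation is that, since $A_0>1$, writing $b=A_0^{1/2}>1$ and applying Bernoulli's inequality gives $A_0^{r/2}=b^r\geq 1+r(b-1)$ for every integer $r\geq 0$. Setting $\delta=A_0^{1/2}-1>0$, each term therefore factors as
$$\exp\left(-\frac{1}{2}c_0 A_0^{r/2}z\right)\leq \exp\left(-\frac{1}{2}c_0 z\right)\exp\left(-\frac{1}{2}c_0\delta z\,r\right),$$
so the $r=0$ term $\exp(-\tfrac12 c_0 z)$ dominates and the remaining factor is a geometric series in $r$. Summing it yields
$$\sum_{r\geq 0}\exp\left(-\frac{1}{2}c_0 A_0^{r/2}z\right)\leq \exp\left(-\frac{1}{2}c_0 z\right)\left(1-\exp\left(-\frac{1}{2}c_0\delta z\right)\right)^{-1},$$
and for $z\geq z_0$ the bracketed factor is at most the constant $K=(1-\exp(-\tfrac12 c_0\delta z_0))^{-1}$, giving the bound $K\exp(-\tfrac12 c_0 z)$.

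Finally, I would absorb the constant $K$ into the exponential: choosing $c_1=\tfrac14 c_0$, the inequality $K\exp(-\tfrac12 c_0 z)\leq \exp(-c_1 z)$ holds as soon as $\tfrac14 c_0 z\geq \log K$, i.e. for all $z$ beyond a threshold, which we secure by enlarging $z_0$ if necessary. The steps are entirely routine once the two lemmas are in hand; the only point requiring any care is the extraction of the geometric decay factor, for which the super-linear lower bound $A_0^{r/2}\geq 1+r(A_0^{1/2}-1)$ is the clean tool — crude term-by-term domination by the $r=0$ term alone would leave a divergent series, whereas this estimate is exactly what forces the sum to collapse to (a constant multiple of) its first term.
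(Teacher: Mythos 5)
Your proof is correct and follows exactly the route the paper intends: the paper's own proof of this lemma is the one-line statement that it follows from Lemmas \ref{x:44} and \ref{x:28}, and your argument is precisely the fleshed-out version of that, with the Bernoulli bound $A_0^{r/2}\geq 1+r(A_0^{1/2}-1)$ supplying the geometric decay needed to sum the series. The harmless enlargement of $z_0$ to absorb the constant $K$ is standard and consistent with $z_0$ being an unspecified absolute constant.
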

\begin{proof}
This follows from Lemmas \ref{x:44} and \ref{x:28}.
\end{proof}
\vspace{5mm}
\section{Results of R. de la Bret\`eche and G. Tenenbaum}
R. de la Bret\`eche and G. Tenenbaum \cite{bre} prove the following result (Th\'eor\`eme 4.4):
\begin{theorem}
The function
$$g(\alpha)=\sum_{l\geq 1}\frac{1-2\{l\alpha\}}{l}$$
converges for $\alpha\in\mathbb{Q}$ if and only if 
$$\sum_{r\geq 1}(-1)^r\frac{\log q_{r+1}}{q_r}$$
converges. In this case
\[
g(\alpha)=-\sum_{m\geq 1}\frac{\tau(m)}{\pi m}\sin(2\pi m\alpha),\tag{**}
\]
where $\tau$ stands for the divisor function.
\end{theorem}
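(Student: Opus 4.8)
The plan is to separate the two assertions. Throughout I read $\alpha$ as irrational, since the continued-fraction data $q_r$ of Section~2 is defined only for $\alpha\in(0,1)\setminus\mathbb{Q}$; for rational $\alpha=r/b$ the quantity $1-2\{lr/b\}$ is periodic in $l$ with nonzero mean $1/b$, so $g(\alpha)$ in fact diverges, and the statement is to be understood for the irrationals. I would first rewrite $g$ through the sawtooth $\psi(t)=\{t\}-\tfrac12$, giving $g(\alpha)=-2\sum_{l\ge1}\psi(l\alpha)/l$, and study the partial sums $G_L(\alpha)=-2\sum_{l\le L}\psi(l\alpha)/l$, whose limit is what is at stake.

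For the convergence dichotomy the decisive step is an approximate functional equation relating $g(\alpha)$ to $g(T\alpha)$ under the Gauss map. Cutting the summation at the first convergent denominator, the indices $l\lesssim 1/\alpha$ satisfy $\{l\alpha\}=l\alpha$ and contribute $\sum_{l\le1/\alpha}(1-2l\alpha)/l=\log(1/\alpha)+O(1)$, while the complementary indices, after the rescaling that realises $T\alpha=\{1/\alpha\}$, reassemble into $-\alpha\,g(T\alpha)$ with bounded error. This gives a cocycle of the shape
\[
g(\alpha)=\log\frac{1}{\alpha}-\alpha\,g(T\alpha)+O(1),
\]
the alternating analogue of Brjuno's functional equation. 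Writing $\beta_{r-1}=\prod_{j=0}^{r-1}T^j\alpha$ and iterating, I obtain
\[
g(\alpha)=\sum_{r\ge0}(-1)^r\beta_{r-1}\,\log\frac{1}{T^r\alpha}+E(\alpha),
\]
where $E(\alpha)$ collects the accumulated $O(1)$ errors weighted by $\beta_{r-1}$. Now $\beta_{r-1}=\|q_{r-1}\alpha\|\asymp q_r^{-1}$ decays geometrically (the denominators $q_r$ grow geometrically, Section~2) and $\log(1/T^r\alpha)=\log(q_{r+1}/q_r)+O(1)$, so both $E(\alpha)$ and the terms $q_r^{-1}\log q_r$ converge absolutely; hence the convergence of $g(\alpha)$ is equivalent to that of $\sum_{r\ge1}(-1)^r q_r^{-1}\log q_{r+1}$, which is exactly the asserted criterion, in both directions.

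For the closed form, assume the series converges. Substituting the Fourier expansion $\psi(t)=-\sum_{n\ge1}\sin(2\pi nt)/(\pi n)$ into $g(\alpha)=-2\sum_l\psi(l\alpha)/l$ and regrouping by $m=nl$, the inner count $\sum_{l\mid m}1=\tau(m)$ furnishes the divisor coefficients and yields the closed form (**). Since this double sum is only conditionally convergent, I would legitimise the rearrangement by Abel summation: with $D_n(\alpha)=\sum_{l\le n}\psi(l\alpha)$, partial summation converts $\sum_l\psi(l\alpha)/l$ into $\sum_n D_n(\alpha)/(n(n+1))$ plus a boundary term, and the continued-fraction control of $D_n(\alpha)$ (again governed by $\sum(-1)^r q_r^{-1}\log q_{r+1}$) lets the Fourier and arithmetic truncations be sent to infinity compatibly.

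The main obstacle will be to make the cocycle rigorous: to show that the oscillatory tail $\sum_{l>1/\alpha}\psi(l\alpha)/l$ equals $-\alpha\,g(T\alpha)$ up to an error that is not merely $O(1)$ but stays absolutely summable after weighting by $\beta_{r-1}$ through the iteration, and uniformly in $\alpha$. This calls for the three-distance theorem to describe the values $\{l\alpha\}$ within each block $q_r\le l<q_{r+1}$, together with careful partial summation inside the block; the sign $(-1)^r$ in the criterion is precisely the sign of $q_r\alpha-p_r=(-1)^r\|q_r\alpha\|$ tracked through the recursion.
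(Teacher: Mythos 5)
Your overall strategy is sound, but it is not the route behind the paper's statement, and it contains two genuine gaps. First, on attribution: the paper offers no proof of this theorem at all --- it is quoted (as Th\'eor\`eme 4.4) from de la Bret\`eche--Tenenbaum \cite{bre}, and the machinery the paper imports in Section 3 shows what their argument looks like: by partial summation $\sum_{n}\tau(n)n^{-1}\sin(2\pi n\alpha)=\int_1^{\infty}Z_\tau(t,t;\alpha)t^{-2}\,dt$, the line is cut into the intervals $[\xi_m,\xi_{m+1}]$ on which $q(\alpha;Q_t)=q_m$, and equation (11.5) of \cite{bre} evaluates each piece as $\tfrac{\pi}{2}\,\mathrm{sgn}(\varepsilon_m)\,q_m^{-1}\log q_{m+1}$ plus absolutely summable errors; since $\mathrm{sgn}(\varepsilon_m)=(-1)^m$, summing over $m$ produces exactly the alternating criterion. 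Your Gauss-map cocycle is instead the classical Wilton-style approach, and parts of it are correct and well executed: the reading of ``$\alpha\in\mathbb{Q}$'' as a misprint for ``$\alpha\notin\mathbb{Q}$'' is right (for $\alpha=r/b$ the summand has mean $1/b$ per period, so $g$ diverges), and the reduction from the Wilton series $\sum_r(-1)^r\beta_{r-1}\log(1/T^r\alpha)$ to $\sum_r(-1)^r q_r^{-1}\log q_{r+1}$ is essentially complete as sketched, because $\beta_{r-1}=1/(q_r+q_{r-1}T^r\alpha)$ makes every discrepancy term absolutely summable against the geometric growth of $q_r$.

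The gaps are these. (1) The functional equation $g(\alpha)=\log(1/\alpha)-\alpha\,g(T\alpha)+O(1)$ is circular as written: $g(T\alpha)$ is not known to exist --- its convergence is precisely what is being decided. You must prove a partial-sum version $G_L(\alpha)=\log(1/\alpha)-\alpha\,G_{\lfloor\alpha L\rfloor}(T\alpha)+E_L(\alpha)$, and for the ``if and only if'' you need $E_L(\alpha)$ to \emph{converge} as $L\to\infty$ at each fixed $\alpha$, not merely stay $O(1)$: uniform boundedness already gives your weighted absolute summability $\sum_j\beta_{j-1}O(1)<\infty$, but since the truncation levels $L_j\approx\beta_{j-1}L$ move with $L$, a non-convergent bounded error at a single step destroys both directions of the equivalence. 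You flag this obstacle but propose only the three-distance theorem as a device; this one step is where essentially all of Wilton's (and \cite{bre}'s) work lies, and your proposal does not close it. (2) The passage to (**) by Abel summation is not viable as described: $D_n(\alpha)=\sum_{l\le n}\psi(l\alpha)$ obeys Ostrowski-type bounds of size $\sum_{j\le m(n)}a_{j+1}$, which the conditionally convergent criterion series does not control, and rearranging the conditionally convergent double sum over $m=nl$ needs exactly the quantitative partial-sum asymptotics of \cite{bre}; there the identity is obtained by showing both series equal the same limit, not by regrouping. One further point, in your favor: your computation actually yields $g(\alpha)=\tfrac{2}{\pi}\sum_m\tau(m)m^{-1}\sin(2\pi m\alpha)$, which is the correct identity for the paper's $g$ (numerator $1-2\{l\alpha\}=-2(\{l\alpha\}-\tfrac12)$); the printed (**), with coefficient $-1/\pi$, is \cite{bre}'s identity for $\sum_l(\{l\alpha\}-\tfrac12)/l$ and is a transcription slip in the paper, so you should not assert that your regrouping ``yields (**)'' as stated.
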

The following definitions are adopted from \cite{bre}, p. 8.
\begin{definition}
For a multiplicative function $g$ and $x,y$ with $1\leq y\leq x$ and $\theta\in\mathbb{R}$ we denote by
$$Z_g(x,y;\theta):=\sum_{n\in S(x,y)}g(n)\sin(2\pi\theta n), $$
where $$S(x,y)=\{n\leq x\::\:P(n)\leq y\},$$ 
$P(n)$ being the largest prime factor of $n$.\\
We set
$$\mu(\theta;Q):=\min_{1\leq m\leq Q}\left\| m\theta \right\|\leq \frac{1}{Q} $$
and 
$$q(\theta;Q):=\min\{q\::\:1\leq q\leq Q,\ \text{with}\ \|q\theta\|=\mu(\theta;Q)\}, $$
where $\|\cdot\|$ denotes the distance to the nearest integer.
\end{definition}
We have:
\begin{lemma}
Let $A>0$. For $x\geq 2$, 
$$Q_{x}:=\frac{x}{(\log x)^{4A+24}},$$
$$q:=q(\theta;Q_{x}),\ a\in\mathbb{Z},\ (a,q)=1,$$
$$|q\theta-a|\leq \frac{1}{Q_x},\ \theta_q:=\theta-\frac{a}{q},\ \theta\in\mathbb{R},  $$
one has uniformly
$$Z_\tau(x,x;\theta)=x(\log x)\left\{ \frac{\sin^2(\pi \theta_q x)}{\pi q\theta_q x} + O\left(\frac{(\log q)\log (1+(\theta_q x)^2)}{q|\theta_q|x\log x} \right)+\frac{1}{(\log x)^A}\right\}  $$
\end{lemma}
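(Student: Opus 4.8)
The plan is to write $Z_\tau(x,x;\theta)=\mathrm{Im}\sum_{n\le x}\tau(n)e(n\theta)$ with $e(t)=e^{2\pi i t}$, and to exploit the rational approximation $\theta=\frac{a}{q}+\theta_q$, where $|\theta_q|\le 1/(qQ_x)$ and $q\le Q_x$. Factoring $e(n\theta)=e(na/q)\,e(n\theta_q)$ splits off a $q$-periodic arithmetic factor from a slowly varying analytic factor. First I would write $\tau=1*1$ and open the sum along the hyperbola $de\le x$, so that $\sum_{n\le t}\tau(n)e(na/q)=\sum_{d\le\sqrt t}\sum_{e\le t/d}e(dea/q)+(\text{symmetric})-(\text{corner})$. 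The inner geometric sum over $e$ has ratio $e(da/q)$ and is $O\!\big(\min(t/d,\|da/q\|^{-1})\big)$; since $(a,q)=1$ it is large only on the resonant divisors $q\mid d$.

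On the resonant divisors $d=qd'$ the inner sum equals $\lfloor t/(qd')\rfloor$, and summing $2\sum_{d'\le\sqrt t/q}t/(qd')$ gives the mean value $M(t):=\sum_{n\le t}\tau(n)e(na/q)=\frac{t\log t}{q}+(\text{lower order})$, the factor $2$ from the hyperbola being cancelled by $\log\sqrt t=\tfrac12\log t$. I would then reinstate the analytic factor by Abel summation, $\sum_{n\le x}\tau(n)e(na/q)e(n\theta_q)=\int_{1}^{x}e(t\theta_q)\,dM(t)$, replacing $dM(t)$ by $\frac{\log t}{q}\,dt$ on the main term and moving the replacement of $\log t$ by $\log x$ into the error. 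Taking imaginary parts and using $\int_0^x\sin(2\pi t\theta_q)\,dt=\frac{1-\cos(2\pi x\theta_q)}{2\pi\theta_q}=\frac{\sin^2(\pi\theta_q x)}{\pi\theta_q}$ reproduces exactly the claimed main term $x(\log x)\cdot\frac{\sin^2(\pi\theta_q x)}{\pi q\theta_q x}$.

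Two sources of error remain. The non-resonant divisors $q\nmid d$, after partial summation against $e(n\theta_q)$, contribute a quantity whose size is governed by $|\theta_q|^{-1}$, by a factor $\log q$ coming from the secondary terms of the divisor sum, and by $\log(1+(\theta_q x)^2)$ coming from integrating the $\min(t,\|da/q\|^{-1})$-type bound against the oscillation over $[1,x]$; together these produce the stated $O\!\big(\frac{(\log q)\log(1+(\theta_q x)^2)}{q|\theta_q|x\log x}\big)$ term. The residual $\frac{1}{(\log x)^A}$ term, and the engineered choice $Q_x=x/(\log x)^{4A+24}$, are forced by the demand for uniformity in $\theta$: when $q$ is only moderately smaller than $x$ one must bound $\sum_{n\le t}\tau(n)e(na/q)$ beyond the naive diagonal, using a Vaughan-type decomposition of $\tau$ together with standard estimates for incomplete Gauss- and Kloosterman-type sums, and it is exactly here that a fixed power of $\log x$ is lost. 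The hard part will be proving these off-diagonal estimates uniformly in $q\le Q_x$ with the precise shape above; once that is in hand, the partial-summation step and the main-term evaluation are routine.
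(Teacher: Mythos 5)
First, note that the paper does not actually prove this statement: its ``proof'' is a one-line citation to Lemma 11.2 of de la Bret\`eche--Tenenbaum, so there is no in-paper argument to compare against. Your sketch follows the classical Estermann-style route ($\tau=1*1$, hyperbola method, resonant divisors $q\mid d$, Abel summation against $e(n\theta_q)$), and the main-term computation is essentially right, with one omission you should address: the resonant divisors run over $d'\le\sqrt{t}/q$, so the mean value is $\frac{t}{q}\bigl(\log t-2\log q+O(1)\bigr)$, not $\frac{t\log t}{q}$ plus lower order. Since $q$ may be a power of $x$, the $-2\log q$ piece is of the same order as the main term; it can only be absorbed into the stated error because $\sin^2(\pi\theta_q x)\ll\log\bigl(1+(\theta_q x)^2\bigr)$, which is in fact where the factor $\log q$ in the error term comes from. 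You assert the cancellation ``$2\times\tfrac12\log t$'' without noticing this.

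The genuine gap is the part you yourself defer as ``the hard part.'' The lemma is claimed uniformly for $q\le Q_x=x/(\log x)^{4A+24}$, and in that range the naive hyperbola-method error for $\sum_{n\le t}\tau(n)e(na/q)$, which is of size roughly $\sqrt{qt}$, is vastly larger than the main term $\frac{t\log t}{q}\asymp(\log x)^{4A+25}$; the non-resonant bound $\sum_d\min(t/d,\|da/q\|^{-1})$ likewise gives nothing useful once $q$ is close to $x$. So the entire content of the lemma --- the relative error $(\log x)^{-A}$ and the precise exponent $4A+24$ --- lives exactly in the step you wave at with ``Vaughan-type decomposition'' and ``standard estimates for Kloosterman-type sums'' without specifying any estimate or verifying that it yields the claimed shape. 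As written, the proposal establishes the main term only for $q$ of size a bounded power of $\log x$ and is not a proof of the stated uniform result; to be fair, the authors themselves do not reprove it either, but import it wholesale from \cite{bre}.
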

\begin{proof}
This is Lemma 11.2 of \cite{bre}, pp. 64-65.
\end{proof}
\begin{definition}
For $\theta\in\mathbb{R}\setminus\mathbb{Q}$ let $(q_m)_{m\geq 1}=(q_m(\theta))_{m\geq 1}$ denote the sequence of the denominators of 
the partial fractions of $\theta$. Let $a_m/q_m$ denote the $m$-th partial fraction of $\theta$.\\ 
We set
$$\varepsilon_m:=\theta-\frac{a_m}{q_m}. $$
The set of all real numbers for which $q(\theta;Q_x)=q_m$ is an interval defined by the conditions $q_m\leq Q_x<q_{m+1}$. We denote it
by $[\xi_m,\xi_{m+1}]$.
\end{definition}
Then, we have:
\begin{lemma}
For a positive real constant $B$, we have:
$$\xi_m \asymp q_m(\log q_{m})^B, $$
$$ |\varepsilon_m|\xi_m\asymp\frac{(\log q_m)^B}{q_{m+1}},  $$
$$ |\varepsilon_m|\xi_{m+1}\asymp\frac{(\log q_{m+1})^B}{q_{m}},  $$
where $K\asymp L$ denotes $K=O(L)$ and $L=O(K)$.
\end{lemma}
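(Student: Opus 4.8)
The plan is to reduce all three estimates to two ingredients: the equation characterizing $\xi_m$ that comes directly from the definition of the interval $[\xi_m,\xi_{m+1}]$, together with the classical two-sided bound for the approximation error of a convergent. Neither ingredient is deep; the content of the lemma is the bookkeeping that converts the definition of $\xi_m$ into a clean asymptotic.

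First I would make the meaning of $\xi_m$ explicit. By definition the interval $[\xi_m,\xi_{m+1}]$ is the set of $x$ satisfying $q_m\le Q_x<q_{m+1}$, and since $Q_x=x/(\log x)^{B}$ is increasing for $x$ large (its derivative is $(\log x)^{-B}(1-B/\log x)>0$ once $\log x>B$), the left endpoint is characterized by $Q_{\xi_m}=q_m$, i.e.
$$\xi_m=q_m(\log\xi_m)^{B}.$$
Taking logarithms gives $\log\xi_m=\log q_m+B\log\log\xi_m$. Because $\xi_m\ge q_m\to\infty$, the term $B\log\log\xi_m$ is $o(\log\xi_m)$, so $\log\xi_m\sim\log q_m$ and hence $(\log\xi_m)^{B}\asymp(\log q_m)^{B}$. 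Substituting back into the displayed equation yields the first relation $\xi_m\asymp q_m(\log q_m)^{B}$, and the identical argument with $m$ replaced by $m+1$ gives $\xi_{m+1}\asymp q_{m+1}(\log q_{m+1})^{B}$.

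For the remaining two relations I would invoke the standard continued fraction inequality
$$\frac{1}{q_m(q_{m+1}+q_m)}<|\varepsilon_m|<\frac{1}{q_mq_{m+1}},$$
which follows from the recursion of Lemma~\ref{x:quo} and records precisely that $|\varepsilon_m|\asymp(q_mq_{m+1})^{-1}$. Multiplying this by the asymptotics for $\xi_m$ and $\xi_{m+1}$ obtained above gives
$$|\varepsilon_m|\xi_m\asymp\frac{1}{q_mq_{m+1}}\,q_m(\log q_m)^{B}=\frac{(\log q_m)^{B}}{q_{m+1}}$$
and
$$|\varepsilon_m|\xi_{m+1}\asymp\frac{1}{q_mq_{m+1}}\,q_{m+1}(\log q_{m+1})^{B}=\frac{(\log q_{m+1})^{B}}{q_m},$$
which are the second and third relations.

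The only genuinely delicate point is the inversion in the first step: one must check that solving the transcendental relation $\xi_m=q_m(\log\xi_m)^{B}$ really does produce $\log\xi_m\asymp\log q_m$ with \emph{absolute} implied constants, uniformly in $m$. Writing $\log\xi_m/\log q_m=1/(1-B\log\log\xi_m/\log\xi_m)$, the bootstrap $\log\log\xi_m=o(\log\xi_m)$ shows this ratio tends to $1$, hence is pinned between two fixed positive constants for all $m$ beyond a threshold; the finitely many small indices are absorbed into the implied constants of $\asymp$. Everything else is direct substitution, so I expect no further obstacle.
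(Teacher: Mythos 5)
Your proposal is correct in substance, but it is worth noting that the paper does not actually prove this lemma at all: its ``proof'' consists of the single line ``This is equation (6.3) of \cite{bre}, p.~22,'' so you have supplied a self-contained argument where the authors merely cite de la Bret\`eche and Tenenbaum. Your route is the natural one and matches what the cited source does implicitly: identify $\xi_m$ as the solution of $Q_{\xi_m}=q_m$ (with $B=4A+24$), invert the relation $\xi_m=q_m(\log\xi_m)^B$ to get $\log\xi_m\sim\log q_m$, and combine with the classical two-sided bound $\left(q_m(q_{m+1}+q_m)\right)^{-1}<|\varepsilon_m|<\left(q_mq_{m+1}\right)^{-1}$, which indeed follows from the recursion of Lemma~\ref{x:quo}. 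Two small caveats you should make explicit. First, $x\mapsto x/(\log x)^B$ is decreasing on $2\le x\le e^B$, so the characterization of $\xi_m$ as the unique solution of $Q_{\xi_m}=q_m$ and your bootstrap $\log\log\xi_m=o(\log\xi_m)$ are only valid for $m$ large; since $q_m$ grows at least like the Fibonacci numbers (uniformly in $\theta$), only a bounded number of indices are affected, and these are absorbed into the constants exactly as you say. Second, the asserted relation $\xi_m\asymp q_m(\log q_m)^B$ is vacuous or false when $q_m=1$ (then $\log q_m=0$), so the statement must be read for $m$ beyond a fixed threshold; this degeneracy is inherited from the source and is harmless for the application in Lemma~\ref{x:36}, where the relations are only used inside a convergent sum over large $m$. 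With those provisos your argument is complete and arguably more informative than the paper's citation.
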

\begin{proof}
This is equation (6.3) of \cite{bre}, p. 22.
\end{proof}
\begin{lemma}\label{x:36}
Let $\alpha\in (0,1)\setminus \mathbb{Q}$. There are constants $c_2, c_3>0$, such that 
$$|g(\alpha)|\leq c_2c(\alpha,+\infty)+c_3.  $$
\end{lemma}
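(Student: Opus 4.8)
The plan is to start from the explicit expansion (**), namely $g(\alpha)=-\sum_{m\ge 1}\frac{\tau(m)}{\pi m}\sin(2\pi m\alpha)$, and to recognize the partial sums $\sum_{n\le t}\tau(n)\sin(2\pi n\alpha)$ as exactly $Z_\tau(t,t;\alpha)$, since every $n\le t$ trivially satisfies $P(n)\le n\le t$, so $S(t,t)=\{n\le t\}$. As $t\mapsto Z_\tau(t,t;\alpha)$ is constant on each $[n,n+1)$, partial summation would give the exact identity
$$g(\alpha)=-\frac1\pi\int_1^\infty\frac{Z_\tau(t,t;\alpha)}{t^2}\,dt,$$
the boundary contribution vanishing because convergence of the series in (**) forces $t^{-1}Z_\tau(t,t;\alpha)\to 0$. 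It then suffices to bound $\int_1^\infty|Z_\tau(t,t;\alpha)|\,t^{-2}\,dt$, so the whole problem reduces to integrating the pointwise estimate for $Z_\tau$ furnished by the de la Bret\`eche--Tenenbaum lemma and showing the result is $O(c(\alpha,+\infty))+O(1)$.

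First I would fix the parameter $A$ in that lemma to be an absolute constant with $A>2$; then $B$ and all implied constants are absolute. Next I would split the range of integration into the intervals $[\xi_m,\xi_{m+1}]$ on which $q(\alpha;Q_t)=q_m$ is constant (together with $\int_1^2$, which is trivially $O(1)$). On $[\xi_m,\xi_{m+1}]$ the hypotheses of the lemma hold with $q=q_m$, $a=a_m$ and $\theta_q=\varepsilon_m$, so after multiplying by $t^{-2}$ — and noting that the $\log t$ produced by $Z_\tau=t(\log t)\{\cdots\}$ cancels the $\log x$ in the second error term — the integrand is bounded by an absolute constant times
$$\frac{(\log t)\sin^2(\pi\varepsilon_m t)}{q_m|\varepsilon_m|t^2}\ +\ \frac{(\log q_m)\log(1+(\varepsilon_m t)^2)}{q_m|\varepsilon_m|t^2}\ +\ \frac{1}{t(\log t)^{A-1}}.$$
The decisive manoeuvre is the substitution $u=|\varepsilon_m|t$, which turns $dt/t^2$ into $|\varepsilon_m|\,du/u^2$ and replaces the three pieces by $u$-integrals that converge at both ends.

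Carrying this out, the main term becomes $\frac1{\pi q_m}\int(\log u+\log\frac1{|\varepsilon_m|})\frac{\sin^2\pi u}{u^2}\,du$; since $\int_0^\infty\frac{\sin^2\pi u}{u^2}\,du$ and $\int_0^\infty\frac{\log u\,\sin^2\pi u}{u^2}\,du$ both converge, this is $O\big(q_m^{-1}\log(1/|\varepsilon_m|)\big)$. The middle term becomes $\frac{\log q_m}{q_m}\int\frac{\log(1+u^2)}{u^2}\,du=O(q_m^{-1}\log q_m)$, again because $\int_0^\infty\frac{\log(1+u^2)}{u^2}\,du<\infty$. Now the displayed $\asymp$-estimates give $|\varepsilon_m|\asymp(q_mq_{m+1})^{-1}$, whence $\log(1/|\varepsilon_m|)\asymp\log q_{m+1}$, so the contribution of $[\xi_m,\xi_{m+1}]$ is $O\big(q_m^{-1}\log q_{m+1}\big)+O\big(q_m^{-1}\log q_m\big)$. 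Summing over $m$, the first type sums to $O(c(\alpha,+\infty))$, while $\sum_m q_m^{-1}\log q_m$ converges to an absolute constant (because $q_m\ge A_0^m$), as does the total of the $\int dt/(t(\log t)^{A-1})$ pieces. Collecting everything would yield $|g(\alpha)|\le c_2\,c(\alpha,+\infty)+c_3$.

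I expect the only genuine obstacle to be the main term: replacing $\sin^2(\pi\varepsilon_m t)$ by $1$ and integrating produces a contribution of size $q_{m+1}/q_m$, which is far too large and is not controlled by $c(\alpha,+\infty)$. It is precisely the oscillation encoded in $\sin^2$, made visible through the change of variable $u=|\varepsilon_m|t$ and the convergence of $\int\sin^2(\pi u)u^{-2}\,du$, that reduces this to the admissible size $q_m^{-1}\log q_{m+1}$; and matching this to a single summand of $c(\alpha,+\infty)$ is exactly what the relation $|\varepsilon_m|\asymp(q_mq_{m+1})^{-1}$ supplies.
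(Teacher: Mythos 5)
Your proposal is correct and follows essentially the same route as the paper: partial summation to write $g(\alpha)$ as $\int_1^{\infty}Z_\tau(t,t;\alpha)\,t^{-2}\,dt$, decomposition into the intervals $[\xi_m,\xi_{m+1}]$ on which $q(\alpha;Q_t)=q_m$, a per-interval estimate of size $\frac{\log q_{m+1}}{q_m}$ plus summable errors, and summation against $c(\alpha,+\infty)$ using the exponential growth of $q_m$. The only difference is that you re-derive the per-interval estimate (via the substitution $u=|\varepsilon_m|t$ and the convergent integrals of $\sin^2(\pi u)/u^2$ and $\log(1+u^2)/u^2$) from the stated asymptotic for $Z_\tau$, whereas the paper simply quotes equation (11.5) of de la Bret\`eche--Tenenbaum for it; your version is a more self-contained rendering of the same argument.
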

\begin{proof}
We closely follow \cite{bre}, p. 65. By partial summation, we obtain:
\begin{align*}
g(\alpha)&=\sum_{n\geq1}\frac{\tau(n)}{n}\sin\left( 2\pi n \alpha \right) =\int_1^{+\infty}Z_{\tau}(t,t;\alpha) \frac{dt}{t^2}\\
&=\sum_{m\geq 1}\left( \int_{\xi_m}^{\xi_{m+1}} Z_{\tau}(t,t;\alpha)\frac{dt}{t^2} \right).
\end{align*}
By equation 11.5 of \cite{bre}, p. 65, we have
$$\int_{\xi_m}^{\xi_{m+1}} Z_{\tau}(t,t;\alpha)\frac{dt}{t^2}=\frac{1}{2}\pi\: \text{sgn}(\varepsilon_m)\frac{\log q_{m+1}}{q_m}+O\left(\frac{1}{q_m^{1-1/B}}+\int_{\xi_m}^{\xi_{m+1}}\frac{dt}{t(\log t)^A} \right),$$
where $A$ is fixed, but arbitrarily large.\\
Therefore
\begin{align*}
g(\alpha)&=\int_1^{+\infty}Z_{\tau}(t,t;\alpha) \frac{dt}{t^2}\\
&\leq c_2\sum_{m\geq 1}\frac{\log q_{m+1}}{q_m}+\sum_{m\geq 1}q_m^{1-1/B}+\int_1^{+\infty}\frac{dt}{t(\log t)^A}\\
&\leq c_2\:c(\alpha,+\infty)+c_3,
\end{align*}
since the sequence $(q_m)_{m\geq 1}$ is growing exponentially and the integral converges if $A>1$. This completes the proof. 
\end{proof}
\textit{Proof of Theorem \ref{x:maint}.} Let $L\in\mathbb{N}$ and assume that $\alpha$ satisfies (**) (Th\'eor\`eme 4.4. of \cite{bre}) and $|g(\alpha)|\geq 4L.$\\
We apply Lemmas \ref{x:66} and \ref{x:36} and obtain
$$meas\{\alpha\::\:|g(\alpha)|\geq yL\}\leq \exp(-c_1yL).$$
Therefore 
\begin{align*}
\int_0^1|g(\alpha)|^Ld\alpha&\leq \sum_{j\geq0}\left((2^{j+1}L)^L\:meas\{\alpha\::\: 2^jL\leq|g(\alpha)|\leq 2^{j+1}L\}\right)\\
&\leq \sum_{j\geq 0}(2^{j+1}L)^L\exp(-c_12^jL)\leq C_0^LL^L.
\end{align*}
\qed\\
However, 
\begin{align*}
H_k&=\int_0^1\left(\frac{g(x)}{2\pi}\right)^{2k}dx=(2\pi)^{-2k}\int_0^1g(x)^{2k}dx\\
&\leq (2\pi)^{-2k}\:C_0^{2k}\:(2k)^{2k}\\
&=\left(\frac{C_0}{2\pi}\right)^{2k}(2k)^{2k},\\
\end{align*}
because of Theorem \ref{x:maint} with $L=2k$, $k\in\mathbb{N}$. \\
Also, 
$$(2k)^{2k}\leq (2k)!\: 3^{2k},  $$
for $k\geq k_0$, for some $k_0\in\mathbb{N}.$\\
Hence
\begin{align*}
\frac{H_k}{(2k)!}&\leq \left(\frac{C_0}{2\pi} \right)^{2k}3^{2k}  \\
&=\left(\frac{3\:C_0}{2\pi} \right)^{2k},
\end{align*}
for $k\geq k_0$, for some $k_0\in\mathbb{N}.$\\ 
Hence, the radius of convergence of the series 
$$\sum_{k\geq 0}\frac{H_k}{(2k)!}x^k  $$
is positive.\\ \\
For the proof of Theorem \ref{x:122}, the following definitions and lemmas will be used.
\begin{definition}\label{x:377}
For $k\in\mathbb{N}\cup\{0\}$ we set
$$I:=I(k)=\left[ 0,e^{-2k}\right]\ \ \text{and}\ \ l_0:=l_0(k)=e^{2k}.$$
We fix $\delta>0$ arbitrarily small and set
$$g_1(\alpha):=\sum_{l\leq l_0^{1-2\delta}}\frac{B(l\alpha)}{l},\ \ g_2(\alpha):=\sum_{l_0^{1-2\delta}<l\leq l_0^{1+2\delta}}\frac{B(l\alpha)}{l},\ \ g_3(\alpha):=\sum_{l> l_0^{1+2\delta}}\frac{B(l\alpha)}{l},$$
where $B(u)=1-2\{u\}$, $u\in\mathbb{R}$.
\end{definition}
In the sequel, we assume $k\geq k_0$, where $k_0\in\mathbb{N}$, sufficiently large.
\begin{lemma}\label{x:6l1}
We have $$g(\alpha)=g_1(\alpha)+g_2(\alpha)+g_3(\alpha),$$ for every $\alpha\in\mathbb{R}$.
\end{lemma}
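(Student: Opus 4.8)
The plan is to observe that this lemma is a pure regrouping of the defining series for $g$ according to the size of the summation index $l$. First I would record the elementary identity underlying the whole decomposition: by the definition $B(u)=1-2\{u\}$, each summand of $g$ rewrites as
$$\frac{1-2\{l\alpha\}}{l}=\frac{B(l\alpha)}{l},$$
so that $g(\alpha)=\sum_{l\geq 1}\frac{B(l\alpha)}{l}$ in the same sense (as a limit of partial sums) in which $g$ was originally defined.

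Next I would check that the three index ranges appearing in $g_1$, $g_2$, $g_3$ partition the positive integers. Since $l_0=l_0(k)=e^{2k}>1$ and $\delta>0$, we have $l_0^{1-2\delta}<l_0^{1+2\delta}$, so the conditions $l\leq l_0^{1-2\delta}$, $l_0^{1-2\delta}<l\leq l_0^{1+2\delta}$, and $l>l_0^{1+2\delta}$ are mutually exclusive and jointly exhaustive over $l\in\mathbb{N}$. Every positive integer therefore contributes to exactly one of the three sums.

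It then remains only to justify the rearrangement, and this is the single point that deserves care. The ranges defining $g_1(\alpha)$ and $g_2(\alpha)$ are both \emph{finite}, so $g_1+g_2$ is precisely the partial sum $\sum_{l\leq l_0^{1+2\delta}}B(l\alpha)/l$, while $g_3(\alpha)$ is the complementary tail. Hence $g_3=g-(g_1+g_2)$ converges exactly when $g$ does, and the asserted equality follows immediately. The main (indeed only) obstacle is to note that no illegitimate reordering of a conditionally convergent series occurs: we are splitting off a finite initial block rather than permuting terms, so associativity of the partial sums suffices and no absolute convergence is required. For $\alpha$ at which $g(\alpha)$ does not converge the statement is vacuous, while in the regime relevant to the proof of Theorem \ref{x:122} (where $\alpha$ satisfies (**)) convergence is guaranteed, so the decomposition holds throughout.
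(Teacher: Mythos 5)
Your proof is correct and takes essentially the same route as the paper, which simply declares the identity obvious from the definitions. Your added care about the convergence issue --- that $g_1$ and $g_2$ are finite sums, so only a finite initial block is split off and no rearrangement of a conditionally convergent series occurs --- is exactly the right justification for the step the paper leaves implicit.
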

\begin{proof}
It is obvious by the definition of $g(\alpha),\:g_1(\alpha),\:g_2(\alpha),\:g_3(\alpha)$.
\end{proof}
\begin{lemma}\label{x:6l2}
For $\alpha\in I$, we have 
$$g_1(\alpha)\geq(1-8\delta)2k,$$
for $k\in\mathbb{N}\cup\{0\}.$
\end{lemma}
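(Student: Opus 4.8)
The plan is to exploit the fact that for $\alpha$ in the very short interval $I=[0,e^{-2k}]$, every argument $l\alpha$ appearing in $g_1$ stays strictly below $1$, so that no ``wrap-around'' of the fractional part occurs and $B(l\alpha)$ takes an explicit linear form. First I would observe that for $l\le l_0^{1-2\delta}$ and $\alpha\le e^{-2k}=l_0^{-1}$ one has
$$0\le l\alpha\le l_0^{1-2\delta}\cdot l_0^{-1}=l_0^{-2\delta}=e^{-4k\delta}<1,$$
for $k$ large. Hence $\{l\alpha\}=l\alpha$ and $B(l\alpha)=1-2l\alpha$ for every $l$ in the range of summation.

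Second, I would substitute this into the definition of $g_1$ and separate the two resulting sums:
$$g_1(\alpha)=\sum_{l\le l_0^{1-2\delta}}\frac{1-2l\alpha}{l}=\sum_{l\le l_0^{1-2\delta}}\frac{1}{l}-2\alpha\sum_{l\le l_0^{1-2\delta}}1.$$
The first sum is harmonic, and I would bound it below by $\log\left(l_0^{1-2\delta}\right)=(1-2\delta)\log l_0=(1-2\delta)\cdot 2k$, using the elementary inequality $\sum_{l\le L}1/l\ge\log L$ (which follows from comparing the sum with $\int_1^{L+1}dt/t$).

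Third, I would show the subtracted term is negligible: since there are at most $l_0^{1-2\delta}$ summands and $\alpha\le e^{-2k}$,
$$2\alpha\sum_{l\le l_0^{1-2\delta}}1\le 2e^{-2k}\,l_0^{1-2\delta}=2e^{-4k\delta},$$
which tends to $0$ as $k\to\infty$. Combining the two estimates yields
$$g_1(\alpha)\ge(1-2\delta)\,2k-2e^{-4k\delta}.$$

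Finally, since the coefficient gap $(1-2\delta)2k-(1-8\delta)2k=12\delta k$ grows linearly while $2e^{-4k\delta}\to 0$, the inequality $(1-2\delta)2k-2e^{-4k\delta}\ge(1-8\delta)2k$ holds for all sufficiently large $k$, giving the claim. I do not anticipate any genuine obstacle: the whole argument rests on the single structural observation that $I$ is short enough to force $l\alpha\in[0,1)$ throughout, after which the bound is a one-line comparison of the harmonic sum against an exponentially small correction. The generous slack between the coefficients $1-2\delta$ and $1-8\delta$ is precisely what absorbs that error term.
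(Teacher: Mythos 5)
Your proof is correct and follows essentially the same route as the paper: both arguments rest on the observation that $l\alpha\le l_0^{-2\delta}$ is small for $l\le l_0^{1-2\delta}$ and $\alpha\in I$, so that $B(l\alpha)$ is close to $1$, after which the harmonic sum contributes $(1-2\delta)2k+O(1)$ and the slack down to $(1-8\delta)2k$ absorbs the error for $k\ge k_0$. Your version is marginally sharper in that you track the correction term $2\alpha\sum_l 1\le 2e^{-4k\delta}$ exactly rather than using the paper's uniform bound $B(l\alpha)\ge 1-4\delta$, but this is a cosmetic difference.
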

\begin{proof}
For $\alpha\in I$, $l\leq l_0^{1-2\delta}$ we have $l\alpha\leq \delta$ and therefore 
$$B(l\alpha)\geq 1-4\delta$$
because of Definition \ref{x:377}. Thus
$$g_1(\alpha)\geq (1-4\delta)\sum_{l\leq l_0^{1-2\delta}}\frac{1}{l}\:.$$
From the formula 
$$\sum_{m\leq u}\frac{1}{m}=\log u+O(1)\ \ (u\rightarrow+\infty) ,$$
we have
$$g_1(\alpha)\geq  (1-8\delta)2k. $$
\end{proof}
\begin{lemma}\label{x:6l3}
It holds
$$|g_2(\alpha)|\leq 16\delta k,$$
for $k\in\mathbb{N}\cup\{0\}$ and sufficiently small $\delta>0$.
\end{lemma}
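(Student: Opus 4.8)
The plan is to bound $g_2$ crudely, throwing away all oscillation and cancellation in the sine-like pieces, and to reduce everything to an estimate for a harmonic sum over the narrow range $l_0^{1-2\delta}<l\le l_0^{1+2\delta}$. The starting observation is that $B(u)=1-2\{u\}$ takes values in $(-1,1]$, so $|B(u)|\le 1$ for every $u\in\mathbb{R}$, uniformly in the argument. Applying the triangle inequality to the definition of $g_2$ therefore gives, for every $\alpha$,
\[
|g_2(\alpha)|\le\sum_{l_0^{1-2\delta}<l\le l_0^{1+2\delta}}\frac{|B(l\alpha)|}{l}\le\sum_{l_0^{1-2\delta}<l\le l_0^{1+2\delta}}\frac{1}{l}.
\]
Notice that this bound is already independent of $\alpha$, which is exactly what the statement asserts.

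Next I would evaluate the remaining sum using the same asymptotic $\sum_{m\le u}1/m=\log u+O(1)$ employed in the proof of Lemma \ref{x:6l2}. Splitting the sum as a difference of two initial segments yields
\[
\sum_{l_0^{1-2\delta}<l\le l_0^{1+2\delta}}\frac{1}{l}=\log\!\left(l_0^{1+2\delta}\right)-\log\!\left(l_0^{1-2\delta}\right)+O(1)=4\delta\log l_0+O(1).
\]
Since $l_0=e^{2k}$ we have $\log l_0=2k$, so the main term is $4\delta\cdot 2k=8\delta k$, and hence $|g_2(\alpha)|\le 8\delta k+O(1)$.

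Finally I would absorb the $O(1)$ into a second factor of $8\delta k$: because we are working in the regime $k\ge k_0$ with $k_0$ sufficiently large, for fixed $\delta>0$ we can ensure $8\delta k\ge O(1)$ once $k_0$ exceeds a suitable multiple of $1/\delta$, giving $|g_2(\alpha)|\le 16\delta k$ as claimed. There is essentially no genuine obstacle here; the only point requiring a little care is that the admissible threshold $k_0$ may have to be taken large relative to $1/\delta$ so that the constant error term is dominated by the main term. The whole content of the lemma is that on the \emph{middle} frequency range the trivial bound $|B|\le 1$ loses nothing of importance, since the length of the range in logarithmic scale is only $4\delta\log l_0$, which is small compared with the full length $\log l_0=2k$ contributing to $g_1$ in Lemma \ref{x:6l2}.
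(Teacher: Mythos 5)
Your proposal is correct and follows essentially the same route as the paper: bound $|B|\le 1$, reduce to the harmonic sum over $(l_0^{1-2\delta},l_0^{1+2\delta}]$, identify its main term as $4\delta\log l_0=8\delta k$, and absorb the error into a factor of $2$ using the standing assumption $k\ge k_0$. (In fact the error in the harmonic sum over this range is $O(l_0^{-(1-2\delta)})$, exponentially small in $k$, so the absorption is even easier than your $O(1)$ accounting suggests.)
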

\begin{proof}
We have
\begin{align*}
|g_2(\alpha)|\leq\sum_{l_0^{1-2\delta}<l\leq l_0^{1+2\delta}}\frac{1}{l}&\leq 2\left( \log(l_0^{1+2\delta})-\log(l_0^{1-2\delta}) \right)\\
&\leq 16\delta k.
\end{align*}
\end{proof}
\begin{lemma}\label{x:6l4}
For all $\alpha\in I$ that do not belong to an exceptional set $\mathcal{E}$ with measure
$$\text{meas}(\mathcal{E})\leq e^{-2k(1+\delta)},$$
we have
$$|g_3(\alpha)|\leq \delta k.$$
\end{lemma}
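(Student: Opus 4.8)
The plan is to bound the measure of the exceptional set by a second-moment (variance) argument. Write $T:=\mathrm{meas}(I)=e^{-2k}$ and $L:=l_0^{1+2\delta}=e^{2k(1+2\delta)}$, and define
$$\mathcal{E}:=\{\alpha\in I\ :\ |g_3(\alpha)|>\delta k\}.$$
By Chebyshev's inequality,
$$\mathrm{meas}(\mathcal{E})\leq\frac{1}{(\delta k)^2}\int_I g_3(\alpha)^2\,d\alpha,$$
so the whole statement reduces to an upper bound for the second moment $\int_I g_3^2\,d\alpha$. I will aim for $\int_I g_3^2\,d\alpha\ll e^{-4k(1+\delta)}$, which is far stronger than needed: it already forces $\mathrm{meas}(\mathcal{E})\ll e^{-4k(1+\delta)}/(\delta k)^2\leq e^{-2k(1+\delta)}$ for all $k\geq k_0$, which is exactly the claimed bound.

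To extract the main term I would insert the Fourier expansion $B(u)=\frac{2}{\pi}\sum_{m\geq1}\frac{\sin(2\pi mu)}{m}$ into $g_3(\alpha)=\sum_{l>L}\frac{B(l\alpha)}{l}$ and square, giving
$$\int_I g_3^2\,d\alpha=\frac{4}{\pi^2}\sum_{l,l'>L}\frac{1}{ll'}\sum_{m,m'\geq1}\frac{1}{mm'}\int_0^T\sin(2\pi ml\alpha)\sin(2\pi m'l'\alpha)\,d\alpha.$$
The dominant contribution is the diagonal $ml=m'l'$: using $\int_0^T\sin^2(2\pi N\alpha)\,d\alpha=\tfrac{T}{2}+O(1/N)$ and the elementary identity $\sum_{ml=m'l'}\frac{1}{mm'}=\frac{\pi^2}{6}\frac{\gcd(l,l')^2}{ll'}$, the diagonal equals
$$\frac{T}{3}\sum_{l,l'>L}\frac{\gcd(l,l')^2}{l^2l'^2}+(\text{error}).$$
Splitting according to $d=\gcd(l,l')$ and summing the resulting tails of $\zeta(2)$ gives $\sum_{l,l'>L}\frac{\gcd(l,l')^2}{l^2l'^2}\asymp L^{-1}$, so the diagonal is of exact order $T/L=e^{-4k(1+\delta)}$.

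The main obstacle is the off-diagonal part $ml\neq m'l'$ together with the $O(1/N)$ diagonal error: since the sawtooth Fourier coefficients decay only like $1/m$, the double sum is \emph{not} absolutely convergent and cannot be bounded termwise. I would instead use the oscillatory estimate
$$\left|\int_0^T\sin(2\pi N\alpha)\sin(2\pi N'\alpha)\,d\alpha\right|\ll\min\!\Big(T,\tfrac{1}{|N-N'|}\Big)+\min\!\Big(T,\tfrac{1}{N+N'}\Big),$$
and exploit that, because $l'>L\gg l_0=1/T$, for each fixed triple $(m,l,l')$ there is at most one $m'$ with $|ml-m'l'|\leq 1/T$ (the admissible window for $m'$ has length $2l_0/l'<1$). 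This collapses the near-diagonal off-diagonal terms to a convergent sum, while the truly off-diagonal terms are controlled by the decay $1/|N-N'|$. The decisive structural point is the enormous slack between the true size $e^{-4k(1+\delta)}$ of the second moment and the required bound $(\delta k)^2e^{-2k(1+\delta)}$: the error terms only need to be controlled up to an arbitrary power of $k$, so no sharp cancellation beyond this counting is necessary. Finally, the centering is harmless, since $\big|\int_0^T B(l\alpha)\,d\alpha\big|\leq\frac{1}{4l}$ gives $\big|\int_I g_3\,d\alpha\big|\ll L^{-1}$, so the variance and the second moment agree to leading order and the bound on $\mathrm{meas}(\mathcal{E})$ stands. (An alternative route, closer to the machinery of Lemmas \ref{x:36} and \ref{x:66}, would bound $|g_3(\alpha)|$ by a tail $\sum_{q_j\gtrsim L}\frac{\log q_{j+1}}{q_j}$ of $c(\alpha,+\infty)$ and then estimate the measure where this tail is large, using that $\alpha\in I$ forces $q_1\geq l_0$.)
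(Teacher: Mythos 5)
Your opening move (Chebyshev plus a second--moment bound for $g_3$) is exactly the paper's strategy, but your execution contains a genuine error: the target $\int_I g_3(\alpha)^2\,d\alpha \ll e^{-4k(1+\delta)}=T/L$ is false, and with it the whole ``diagonal main term plus controllable errors'' picture. The point is that $g_3$ does \emph{not} oscillate around zero near $\alpha=0$: since $\int_0^x B(u)\,du=\{x\}(1-\{x\})\geq 0$ for all $x\geq 0$, every term of $g_3$ has nonnegative integral over any interval $[0,\eta]$. Taking $\eta=1/(2L)\subset I$ and keeping only $L<l\leq \tfrac32 L$ (for which $\{l\eta\}(1-\{l\eta\})\geq \tfrac{3}{16}$) gives $\int_0^{\eta}g_3(\alpha)\,d\alpha\geq \tfrac{1}{24L}$, and then Cauchy--Schwarz yields $\int_I g_3^2\,d\alpha\geq \eta^{-1}\bigl(\int_0^\eta g_3\bigr)^2\geq \tfrac{1}{288L}=\tfrac{1}{288}e^{-2k(1+2\delta)}$. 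That is larger than your claimed bound by a factor of order $e^{2k}$: the near--diagonal terms you propose to treat as errors actually \emph{dominate} the diagonal $T/L$, so no refinement of the oscillatory estimates can rescue the stated target. A second, independent problem is that your counting (``at most one $m'$ per triple $(m,l,l')$'') is not enough: bounding $1/m'\ll l'/(ml)$ and summing over the admissible range $l'\leq ml+l_0$ leaves $T\sum_m \tfrac1m\sum_{l>L}\tfrac1l$, which diverges. To control the near--diagonal one must collect frequencies $n=ml$, $n'=m'l'$ and invoke the divisor bound $\tau(n)\ll n^{\epsilon}$ for the number of representations --- precisely the arithmetic input your sketch never uses.

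The repair is also where the paper is much simpler than your plan: since $g_3^2\geq 0$, just give up the restriction to $I$ and bound $\int_I g_3^2\,d\alpha\leq \int_0^1 g_3^2\,d\alpha$. On the full circle Parseval is exact, there are no off--diagonal terms at all: writing $g_3(\alpha)=\sum_{n>l_0^{1+2\delta}}c(n)e(n\alpha)$ with $c(n)=O(n^{-1+\epsilon})$ (Lemma 5.6 of \cite{mr}, which is exactly the collected--frequency/divisor bound above), one gets $\int_0^1 g_3^2\,d\alpha=\sum_{n>l_0^{1+2\delta}}c(n)^2=O\bigl(l_0^{-1-3\delta/2}\bigr)$. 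This is far weaker than your target but is all Chebyshev needs: $\mathrm{meas}(\mathcal{E})\ll (\delta k)^{-2}l_0^{-1-3\delta/2}\leq e^{-2k(1+\delta)}$ for $k\geq k_0$. Your conclusion would likewise survive if you replaced the false target by the correct order $L^{-1+\epsilon}$, since the only bound actually needed is $(\delta k)^2 e^{-2k(1+\delta)}$; but as written, the proposal's central quantitative claim is wrong, and the extra precision you try to extract from integrating only over $I$ buys nothing while creating the (dominant) off--diagonal difficulty.
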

\begin{proof}
The function $g_3$ has the Fourier expansion:
$$g_3(\alpha)=\sum_{l>l_0^{1+2\delta}}c(l)\:e(l\alpha),$$
where $c(l)=O(l^{-1+\epsilon})$ for $\epsilon$ arbitrarily small, by Lemma 5.6 of \cite{mr}.\\
\noindent By Parseval's identity we have
$$\int_0^1g_3(\alpha)^2d\alpha=\sum_{l>l_0^{1+2\delta}}c(l)^2=O\left( \sum_{l>l_0^{1+2\delta}}l^{-2+2\epsilon}\right)=O\left(l_0^{-1-3\delta/2} \right).$$
Let 
$$\mathcal{E}=\{\alpha\::\: |g_3(\alpha)|>\delta k\}.$$
Then
\begin{align*}
(meas(\mathcal{E}))(\delta k)^2&\leq \int_{\mathcal{E}}g_3(\alpha)^2d\alpha\leq \int_0^1g_3(\alpha)^2d\alpha  \\
&=O\left( l_0^{-1-3\delta/2} \right).
\end{align*}
Therefore
$$meas(\mathcal{E})\leq O\left((\delta k)^{-2}\:l_0^{-1-3\delta/2} \right)=O\left(e^{-2k(1+\delta)}\right).$$
This completes the proof of the Lemma.
\end{proof}
\noindent \textit{Proof of Theorem \ref{x:122}.} \\
\noindent By Lemmas \ref{x:6l2}, \ref{x:6l3} and \ref{x:6l4}, we have
$$|g(\alpha)|\geq |g_1(\alpha)|- |g_2(\alpha)|- |g_3(\alpha)| \geq (1-20\delta)2k,$$
for all $\alpha\in I$ except for those values of $\alpha$ that belong to an exceptional set $$\mathcal{E}(I):=\mathcal{E}\cap I\subset I$$ with
$$\text{meas}(\mathcal{E}(I))\leq \frac{1}{2}|I|,$$
where $|I|$ stands for the length of $I$. Hence, we obtain
\begin{align*}
H_{k}=\int_0^1\left(\frac{g(\alpha)}{\pi}\right)^{2k}d\alpha&\geq \frac{1}{2}|I|\left(\frac{1-20\delta}{\pi}\:2k  \right)^{2k}\\
&=\frac{1}{2}e^{-2k}\exp(2k\log 2k)\left(\frac{1-20\delta}{\pi} \right)^{2k}.
\end{align*}
By Stirling's formula we have
$$(2k)!\geq \exp(2k\log 2k)\:\exp(-(1-\delta)2k)$$
and therefore
$$\frac{H_k}{(2k)!}\geq \frac{1}{2}e^{2\delta k}\left(\frac{1-20\delta}{\pi}\right)^{2k}.$$
Since $\delta>0$ can be fixed arbitrarily small, we have
$$\limsup_{k\rightarrow+\infty}\left(\frac{H_k}{(2k)!}\right)^{1/k} \geq \frac{1}{\pi^2}.$$
Therefore, the series
$$\sum_{k\geq 0}\frac{H_k}{(2k)!}x^{k}$$
diverges for $|x|>\pi^2$, where $x\in\mathbb{C}$. This completes the proof of Theorem \ref{x:122}.
\begin{flushright}
\qedsymbol
\end{flushright}

%
%
%
%
\section{The distribution of the cotangent sums $c_0\left(\frac{r}{b} \right)$}
We now give a simpler proof of Theorem 5.2 of \cite{mr} regarding the equidistribution of $c_0(r/b)$ for fixed large positive integer values
of $b$ and $A_0b\leq r\leq A_1b$, where $1/2<A_0<A_1<1$. We need the following Lemmas and Definitions from \cite{billi}.
\begin{lemma}\label{x:441}
Let $\mu$ be a probability measure on the line having finite moments
$$\alpha_k=\int_{-\infty}^{+\infty}x^k\mu(dx)$$
of all orders. If the power series 
$$\sum_{k\geq 1}\alpha_k\frac{r^k}{k!}  $$
has a positive radius of convergence, then $\mu$ is the only probability measure with the moments $\alpha_1,\alpha_2,\ldots$
\end{lemma}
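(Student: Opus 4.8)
The plan is to prove this via the analyticity of the characteristic function in a horizontal strip, combined with the identity theorem and Fourier uniqueness.

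First I would translate the hypothesis into a growth bound on the moments. A positive radius of convergence $R>0$ of $\sum_{k\geq 1}\alpha_k r^k/k!$ forces $\limsup_{k\to\infty}|\alpha_k/k!|^{1/k}=1/R<\infty$, so for any fixed $\rho<R$ there is a constant $C$ with $|\alpha_k|\leq C\,k!\,\rho^{-k}$. In particular, for $0<r<R$ the even moments satisfy $\sum_{k}\alpha_{2k}r^{2k}/(2k)!<\infty$. Next I would deduce convergence of an exponential integral: for $0<s<R$,
$$\int_{-\infty}^{+\infty}e^{s|x|}\,\mu(dx)\leq \sum_{k\geq 0}\frac{s^k}{k!}\int_{-\infty}^{+\infty}|x|^k\,\mu(dx)<\infty,$$
using monotone convergence and bounding the odd absolute moments by the even ones via Cauchy--Schwarz, $\int|x|^{2k+1}\mu(dx)\leq (\alpha_{2k}\alpha_{2k+2})^{1/2}$.

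With this integrability in hand, I would define, for complex $z$ with $|\Im z|<R$, the function $\phi(z)=\int_{-\infty}^{+\infty}e^{izx}\,\mu(dx)$. Since $|e^{izx}|=e^{-(\Im z)x}\leq e^{|\Im z|\,|x|}$ is $\mu$-integrable on the strip, $\phi$ is well defined there, and an application of Morera's theorem (or repeated differentiation under the integral sign, justified by dominated convergence using the exponential bound above) shows that $\phi$ is holomorphic on $\{|\Im z|<R\}$, with derivatives at the origin given by $\phi^{(k)}(0)=i^k\alpha_k$. Thus the Taylor expansion of $\phi$ at $0$ is completely determined by the moment sequence $\alpha_1,\alpha_2,\ldots$.

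Finally, if $\mu$ and $\nu$ are two probability measures sharing the same moments, their characteristic functions $\phi_\mu$ and $\phi_\nu$ are both holomorphic on the connected strip $\{|\Im z|<R\}$ and have identical Taylor coefficients at $0$; hence they agree in a neighborhood of the origin and, by the identity theorem, throughout the strip, in particular on the real axis. The uniqueness theorem for characteristic functions (Fourier inversion) then gives $\mu=\nu$. The main obstacle is precisely the passage from the \emph{algebraic} hypothesis on the moment series to the \emph{analytic} statement that $\phi$ extends holomorphically to a strip: this rests on the interchange of summation and integration in the exponential-moment estimate and on justifying differentiation under the integral sign, both of which are supplied by the growth bound coming from the positive radius of convergence.
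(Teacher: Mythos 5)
Your proof is correct. Note, however, that the paper does not actually prove this lemma at all: its ``proof'' is a citation to Theorem 30.1 of Billingsley's \emph{Probability and Measure}, since the statement is quoted there precisely in order to invoke it. What you have written is, in essence, a self-contained proof of that cited theorem, and it is sound: the Cauchy--Hadamard bound $|\alpha_k|\leq C\,k!\,\rho^{-k}$, the control of odd absolute moments via Cauchy--Schwarz, the resulting exponential integrability $\int e^{s|x|}\,\mu(dx)<\infty$ for $s<R$, and the holomorphy of $\phi(z)=\int e^{izx}\,\mu(dx)$ on the strip $\{|\Im z|<R\}$ are all handled correctly. One point of comparison with the classical treatment: Billingsley propagates the equality $\phi_\mu=\phi_\nu$ along the real line by iterating real Taylor expansions about successive points $t_0\in\mathbb{R}$ (each with radius of convergence at least $R$, uniformly in $t_0$), whereas you obtain the same conclusion in one stroke from the identity theorem applied on the connected strip --- a cleaner step, and legitimate since both characteristic functions are holomorphic there (the strip for $\nu$ is the same, as your integrability argument uses only the shared moments). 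The trade-off is the expected one: the paper's citation is economical and appropriate for its purpose, while your argument makes the lemma self-contained at the cost of invoking Morera, the identity theorem, and Fourier uniqueness as black boxes.
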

\begin{proof}
This is Theorem 30.1 of \cite{billi}, pp. 388-389.
\end{proof}
\begin{definition}\label{x:44442}
A probability measure satisfying the conclusion of Lemma \ref{x:441} is said to be \textbf{determined by its moments}.
\end{definition}
\begin{definition}\label{x:433}
A sequence $(F_n)_{n\geq 1}$ of distribution functions is said to \textbf{converge weakly} to the distribution function $F$ (denoted by 
$F_n\Rightarrow F$) if 
$$\lim_{n\rightarrow +\infty}F_n(x)=F(x)$$
for every point $x$ of continuity of $F(x)$.\\
A sequence $(X_n)_{n\geq 1}$ of random variables is said to \textbf{converge in distribution} (or \textbf{in law}) towards a random variable $X$ 
(denoted by $X_n\Rightarrow X$) with distribution function $F$, if and only if $F_n\Rightarrow F$, that is 
$X_n\Rightarrow X\Leftrightarrow F_n\Rightarrow F$.
\end{definition}
\begin{lemma}
For a sequence $(X_n)_{n\geq 1}$ of random variables and a random variable $X$, we have $X_n\Rightarrow X$ if and only if
$$\lim_{n\rightarrow +\infty}P[X_n\leq x]=P[X\leq x]$$
for every $x\in\mathbb{R}$, such that $P[X=x]=0$.
\end{lemma}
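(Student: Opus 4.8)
The plan is to show that this lemma is nothing more than Definition \ref{x:433} rewritten in terms of the distribution functions $F_n(x)=P[X_n\le x]$ and $F(x)=P[X\le x]$, once the continuity points of $F$ have been identified. By Definition \ref{x:433}, $X_n\Rightarrow X$ holds precisely when $F_n\Rightarrow F$, that is, when $\lim_{n\to\infty}F_n(x)=F(x)$ at every point $x$ at which $F$ is continuous. Hence the entire content of the statement reduces to a single claim: for a distribution function $F$, the point $x$ is a point of continuity of $F$ if and only if $P[X=x]=0$.

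To establish that claim I would first recall the two standard structural properties of a distribution function $F(x)=P[X\le x]$, namely that it is non-decreasing and right-continuous; right-continuity gives $F(x)=F(x^{+})$ for every $x$, so the only possible discontinuity of $F$ at a point $x$ is a jump from the left of size $F(x)-F(x^{-})$. I would then evaluate this jump probabilistically: writing $F(x^{-})=\lim_{y\uparrow x}F(y)$ and applying continuity from below of the probability measure to the increasing sequence of events $\{X\le x-1/m\}\uparrow\{X<x\}$, one gets $F(x^{-})=P[X<x]$, and therefore
$$F(x)-F(x^{-})=P[X\le x]-P[X<x]=P[X=x].$$
Thus $F$ is continuous at $x$ exactly when $P[X=x]=0$.

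Combining these two observations completes the argument: the hypothesis "$x$ is a point of continuity of $F$" occurring in Definition \ref{x:433} is identical to the hypothesis "$P[X=x]=0$", and substituting $F_n(x)=P[X_n\le x]$ and $F(x)=P[X\le x]$ converts the weak-convergence requirement $\lim_{n\to\infty}F_n(x)=F(x)$ into $\lim_{n\to\infty}P[X_n\le x]=P[X\le x]$, giving the stated equivalence in both directions. There is no serious obstacle here, since the result is a standard measure-theoretic fact; the only step that warrants a moment of care is the evaluation of the left limit $F(x^{-})=P[X<x]$, which relies on countable additivity (continuity from below) and not on anything particular to the cotangent-sum moments $H_k$ under study.
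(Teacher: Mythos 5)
Your proof is correct and takes the same route as the paper, which simply states that the lemma ``follows immediately from Definition \ref{x:433}''; you have merely filled in the one detail the paper leaves implicit, namely that the continuity points of $F$ are exactly the $x$ with $P[X=x]=0$, via the standard jump computation $F(x)-F(x^-)=P[X=x]$. No issues.
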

\begin{proof}
This follows immediately from Definition \ref{x:433}.
\end{proof}
\begin{lemma}\label{x:4455}
Suppose that the distribution of $X$ is determined by its moments and that the $X_n$ have moments of all orders, as well as
$$ \lim_{n\rightarrow +\infty}E(X_n^{r})=E(X^{r})$$
for $r=1,2,3,\ldots$\:. Then $X_n\Rightarrow X$.
\end{lemma}
\begin{proof}
This is Theorem 30.2 of \cite{billi}, p. 390.
\end{proof}
We now recall the Definition 5.1 and Theorem 5.2 form \cite{mr}.
\begin{definition}
For $z\in\mathbb{R}$, let 
$$F(z)=\text{meas}\{\alpha\in[0,1]\::\: g(\alpha)\leq z  \},$$
where $``\text{meas}"$ denotes the Lebesgue measure,
$$g(\alpha)=\sum_{l=1}^{+\infty}\frac{1-2\{l\alpha\}}{l}$$  
and 
$$C_0(\mathbb{R})=\{f\in C(\mathbb{R})\::\: \forall\: \epsilon>0,\: \exists\:\text{a compact set}\ \mathcal{K}\subset\mathbb{R},\:\text{such that}\ |f(x)|<\epsilon,\forall\: x\not\in \mathcal{K}  \}.$$
\end{definition}

\begin{theorem}\label{x:introprob}
i) $F$ is a continuous function of $z$.\\
ii) Let $A_0,$ $A_1$ be fixed constants, such that $1/2< A_0<A_1<1$. Let also
$$H_k=\int_0^1\left(\frac{g(x)}{\pi}\right)^{2k}dx,$$
where $H_k$ is a positive constant depending only on $k$, $k\in\mathbb{N}$.\\ 
There is a unique positive measure $\mu$ on $\mathbb{R}$ with the following properties:\\
\ (a) For $\alpha<\beta\in\mathbb{R}$ we have
$$\mu([\alpha,\beta])=(A_1-A_0)(F(\beta)-F(\alpha)).$$
(b)
\begin{equation}
\int x^kd\mu=\left\{
\begin{array}{l l}
    (A_1-A_0)H_{k/2}\:, & \quad \text{for even}\: k\\
    0\:, & \quad \text{otherwise}\:.\\
  \end{array} \right.
 \nonumber
\end{equation}
(c) For all $f\in C_0(\mathbb{R})$, we have
$$\lim_{b\rightarrow+\infty}\frac{1}{\phi(b)}\sum_{\substack{r\::\: (r,b)=1\\ A_0b\leq r\leq A_1b}}f\left( \frac{1}{b}c_0\left( \frac{r}{b}\right) \right)=\int f\:d\mu,$$
where $\phi(\cdot)$ denotes the Euler phi-function.
\end{theorem}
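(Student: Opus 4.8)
The plan is to deduce part (ii) from the \emph{method of moments}, using Theorem \ref{x:maint} precisely to supply the hypothesis of Lemma \ref{x:441}, and to treat part (i) separately as a statement about the absence of atoms. To begin, I would simply \emph{define} $\mu$ to be $(A_1-A_0)$ times the push-forward of Lebesgue measure on $[0,1]$ under $g$, so that $\mu([\alpha,\beta])=(A_1-A_0)(F(\beta)-F(\alpha))$ holds by construction; since a finite Borel measure on $\R$ is determined by its values on all intervals, (a) alone pins $\mu$ down, and uniqueness in (a) is immediate. Write $\nu=(A_1-A_0)^{-1}\mu$ for the associated probability measure, which is the law of $g(U)/\pi$ for $U$ uniform on $[0,1]$ (the scalar being fixed by the normalisation of $H_k$ in the statement).

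For part (b), the moments of $\nu$ are $\int x^k\,d\nu=\int_0^1(g(\alpha)/\pi)^k\,d\alpha$. For odd $k$ these vanish because of the antisymmetry $g(1-\alpha)=-g(\alpha)$, which follows from $\{-u\}=1-\{u\}$ for $u\notin\Z$ and makes the law of $g$ symmetric about $0$; for even $k=2j$ they equal $H_j$ by definition. Finiteness of every moment is guaranteed by Theorem \ref{x:maint}, and multiplying by $A_1-A_0$ yields (b). The decisive point is \emph{determinacy}: by Theorem \ref{x:maint} one has $H_k/(2k)!\le(3C_0/2\pi)^{2k}$, so the series $\sum_{k\ge1}\big(\int x^k\,d\nu\big)\frac{r^k}{k!}=\sum_{j\ge1}\frac{H_j}{(2j)!}r^{2j}$ has positive radius of convergence. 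Hence Lemma \ref{x:441} applies and $\nu$ (equivalently $\mu$) is determined by its moments in the sense of Definition \ref{x:44442}.

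For part (c), let $\mu_b=\phi(b)^{-1}\sum_{r}\delta_{\frac1b c_0(r/b)}$ be the empirical measure of the sum over $A_0b\le r\le A_1b$, $(r,b)=1$; its total mass tends to $A_1-A_0$. I would show that every moment of $\mu_b$ converges to the corresponding moment of $\mu$: for even orders this is exactly the existence of the $H_k$, already established in \cite{mr}, while for odd orders the same computation together with the antisymmetry above and the relation between $c_0(r/b)$ and $g$ gives limit $0$. Passing to the normalised probability measures and invoking the method of moments (Lemma \ref{x:4455}, legitimate precisely because of the determinacy just proved) yields weak convergence; since every $f\in C_0(\R)$ is bounded and continuous and the total masses converge, this delivers the limit relation in (c), and uniqueness of the limiting measure is again guaranteed by determinacy.

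Finally, part (i) is where I expect the real difficulty. Continuity of $F$ is equivalent to the law of $g$ having no atoms, i.e. $\mathrm{meas}\{\alpha:g(\alpha)=z\}=0$ for every $z$, and moment determinacy alone does not forbid atoms. The cleanest route I would attempt is to show that the characteristic function $t\mapsto\int_0^1 e^{itg(\alpha)}\,d\alpha$ tends to $0$ as $|t|\to\infty$: by Wiener's lemma $\frac1{2T}\int_{-T}^{T}\big|\int_0^1 e^{itg(\alpha)}\,d\alpha\big|^2\,dt\to\sum_z\big(\mathrm{meas}\{g=z\}\big)^2$, so decay of the characteristic function forces every atom to vanish. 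Establishing this decay would exploit the oscillation of $g$ and its self-affine behaviour under the Gauss map $T$ of Section 2, together with the tail bound of Lemma \ref{x:66}; alternatively the non-atomicity can be imported from \cite{mr} or \cite{bre}. This non-degeneracy of $g$ is the principal obstacle, whereas the rest of the argument is a direct application of the method of moments powered by Theorem \ref{x:maint}.
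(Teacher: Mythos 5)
Your proposal is correct and follows essentially the same route as the paper: both arguments run the method of moments, using Theorem \ref{x:maint} to verify the hypothesis of Lemma \ref{x:441} (moment determinacy), Lemma \ref{x:4455} to convert convergence of moments (imported from \cite{mr}) into convergence in distribution, and both ultimately import the continuity of $F$ from \cite{mr} rather than reproving it. The only cosmetic difference is that the paper phrases the conclusion as the interval case (Theorem \ref{x:48888}) and then passes to general $f\in C_0(\mathbb{R})$, whereas you argue weak convergence directly; your extra remarks on the antisymmetry $g(1-\alpha)=-g(\alpha)$ and on a possible Wiener-lemma proof of non-atomicity are sound but not needed beyond what the paper cites.
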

We now state and give a new proof of a special case of Theorem \ref{x:introprob} (c) from which the complete Theorem \ref{x:introprob} follows by the definition of the abstract Lebesgue integral.
\begin{theorem}\label{x:48888}
Let $A_0$, $A_1$ be fixed constants, such that $1/2<A_0<A_1<1$, then we have for $\alpha<\beta\in\mathbb{R}:$
\begin{align*}
&\lim_{b\rightarrow+\infty}\frac{1}{\phi(b)}\left|\left\{ r\::\: (r,b)=1,\: A_0b\leq r\leq A_1b,\: \alpha b\leq c_0\left(\frac{r}{b}\right) \leq \beta b\right\}\right|\\ 
&=(A_1-A_0)(F(\beta)-F(\alpha)).
\end{align*}
\end{theorem}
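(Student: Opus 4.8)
The plan is to deduce the statement from the method of moments, Lemma \ref{x:4455}, applied to the empirical distributions of the normalized cotangent sums. For each fixed $b$ set $\mathcal{R}(b)=\{r:(r,b)=1,\ A_0b\le r\le A_1b\}$, put $N(b)=|\mathcal{R}(b)|$, and let $X_b$ be the random variable taking each value $\frac1b c_0(r/b)$, $r\in\mathcal{R}(b)$, with probability $1/N(b)$; let $X$ be the random variable with distribution function $F$. A standard sieve estimate gives $N(b)=(A_1-A_0)\phi(b)(1+o(1))$, so the quantity in the theorem equals $\frac{N(b)}{\phi(b)}\,P[\alpha\le X_b\le\beta]$. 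Hence it suffices to prove the weak convergence $X_b\Rightarrow X$ and then to pass from it to the interval limit, the factor $(A_1-A_0)$ being supplied by $N(b)/\phi(b)\to A_1-A_0$.

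The decisive step, and the reason the present proof is simpler than that of \cite{mr}, is that the law $F$ is determined by its moments in the sense of Definition \ref{x:44442}. Denote these moments by $m_L=\int x^L\,dF(x)$. By the construction of $F$ they are, up to a fixed normalizing constant, the integrals $\int_0^1 g(\alpha)^L\,d\alpha$; the symmetry $g(1-\alpha)=-g(\alpha)$ makes $F$ symmetric about the origin, so the odd moments vanish. Theorem \ref{x:maint} gives $\int_0^1|g|^L\,d\alpha\le C_0^LL^L$, hence $|m_L|\le C_1^LL^L$ for a suitable constant $C_1$. Since $L^L/L!\le e^L$, the series $\sum_{L\ge1}m_L r^L/L!$ is dominated by $\sum_L(eC_1 r)^L$ and so has positive radius of convergence. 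Lemma \ref{x:441} then identifies $F$ as the unique distribution with moments $(m_L)$. This is precisely the conclusion that was out of reach in \cite{mr}, where only $\lim_k H_k^{1/k}=+\infty$ was known.

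It remains to verify the convergence of moments $E(X_b^L)\to m_L$ for every $L\ge1$, the other hypotheses of Lemma \ref{x:4455} being immediate, since $X_b$ has moments of all orders because $c_0(r/b)$ is a finite sum. For even $L=2k$ we have $E(X_b^{2k})=\frac1{N(b)}\sum_{r\in\mathcal{R}(b)}(c_0(r/b)/b)^{2k}$, and the existence of the limit $H_k$ from \cite{mr} combined with $N(b)=(A_1-A_0)\phi(b)(1+o(1))$ shows that this tends to the $2k$-th moment $m_{2k}$ of $F$. For odd $L$ the corresponding averages tend to $0=m_L$, once more by the moment analysis of \cite{mr} together with the symmetry noted above. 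With these limits in hand, Lemma \ref{x:4455} yields $X_b\Rightarrow X$, that is, $F_b\Rightarrow F$ for the distribution functions $F_b$ of $X_b$.

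Finally, by part (i) of Theorem \ref{x:introprob} the distribution function $F$ is continuous, so the limiting law carries no atoms and the closed interval $[\alpha,\beta]$ is a continuity set; hence $P[\alpha\le X_b\le\beta]\to F(\beta)-F(\alpha)$, and multiplying through by $N(b)/\phi(b)\to A_1-A_0$ reproduces the left-hand side of the theorem and gives the asserted value. I expect the real difficulties to lie not in this concluding bookkeeping but in the two structural inputs: that $F$ is determined by its moments, which is now furnished by Theorem \ref{x:maint}, and the convergence of the discrete moments to those of $g$, which is imported from \cite{mr}. Once both are secured, the method of moments closes the argument.
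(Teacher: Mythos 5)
Your proposal is correct and follows essentially the same route as the paper: the method of moments via Lemma \ref{x:4455}, with the determinacy of $F$ by its moments supplied by Theorem \ref{x:maint} together with Lemma \ref{x:441}, the moment convergence imported from \cite{mr}, and the continuity of $F$ closing the argument. Your explicit handling of the normalization $N(b)/\phi(b)\to A_1-A_0$ is a small point of added care that the paper leaves implicit, but it does not constitute a different approach.
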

\begin{proof}
Let $(b_n)_{n\geq 1}$ be a sequence of positive integers with $b_n\rightarrow+\infty$ as $n\rightarrow+\infty$. We set
$$X_n=\frac{1}{b_n}c_0\left(\frac{r}{b_n} \right) $$
and consider $X_n$ as a random variable on the probability space 
$$ \Omega_n=\{ r\::\: (r,b_n)=1, A_0b_n\leq r\leq A_1b_n\} $$
with the counting measure 
$$\mu_n(\mathcal{E})=\frac{|\mathcal{E}|}{|\Omega_n|} $$
for all $\mathcal{E}\subset \Omega_n$.\\
By Lemma 5.13 of \cite{mr}, we have
$$ \lim_{n\rightarrow+\infty}\mu_n([\alpha,\beta])=(A_1-A_0)(F(\beta)-F(\alpha))$$
for all $\alpha<\beta\in\mathbb{R}$.\\
By Theorem \ref{x:maint}, Lemma \ref{x:441} and Definition \ref{x:44442} the measure $\mu$ given by 
$$\mu([\alpha,\beta])=(A_1-A_0)(F(\beta)-F(\alpha)),$$
is determined by its moments. By Theorem \ref{x:introprob} we have
$$\lim_{n\rightarrow+\infty}E(X_n^r)=E(X^r).  $$
Thus, Lemma \ref{x:4455} implies $X_n\Rightarrow X$, where $X=g(\alpha)$ is a random variable on the
probability space $[0,1]$. Since $F$ is a continuous function by Theorem 5.2(i) of \cite{mr}, the claim of Theorem \ref{x:48888} follows.
\end{proof}
\noindent\textbf{Acknowledgments.} The second author (M. Th. Rassias) expresses his gratitude to Professor E. Kowalski, who proposed to him this inspiring area of research, for providing constructive guidance and for granting him support for Postdoctoral research.
\vspace{10mm}

\end{document}